\def\bql{\begin{equation}\label}
\def\eql{\end{equation}\noindent}
\def\ban{\begin{align}}
\def\ean{\end{align}\noindent}
\def\brl{\begin{eqnarray}\label}
\def\erl{\end{eqnarray}\noindent}
\def\bro{\begin{eqnarray*}}
\def\ero{\end{eqnarray*}\noindent}
\def\brr{\begin{array}}
\def\err{\end{array}\noindent}
\def\bdl{\begin{display}\label}
\def\edl{\end{display}\noindent}
\def\bdo{\begin{display}}
\def\edo{\end{display}\noindent}
\def\bth{\begin{theorem}}
\def\eth{\end{theorem}}
\def\bcr{\begin{corollary}}
\def\ecr{\end{corollary}}
\def\bpr{\begin{proposition}}
\def\epr{\end{proposition}}
\def\blm{\begin{lemma}}
\def\elm{\end{lemma}}
\def\bdf{\begin{definition}}
\def\edf{\end{definition}}
\def\bas{\begin{assumptions}}
\def\eas{\end{assumptions}}
\def\bex{\begin{example}\rm}
\def\eex{\end{example}}
\def\bxx{\begin{exercise}\rm}
\def\exx{\end{exercise}}
\def\brm{\begin{remark}\rm}
\def\erm{\end{remark}}
\def\bma{\begin{pmatrix}}
\def\ema{\end{pmatrix}}
\def\bcs{\begin{cases}}
\def\ecs{\end{cases}}
\def\btb{\begin{center}\begin{tabular}}
\def\etb{\end{tabular}\end{center}}
\def\bit{\begin{itemize}}
\def\eit{\end{itemize}}
\def\qed{\quad\hfill\mbox{$\square$}}
\def\a{\alpha}
\def\b{\beta}
\def\g{\gamma}
\def\h{\eta}
\def\j{\psi}
\def\m{\mu}
\def\n{\nu}
\def\q{\theta}
\def\r{\rho}
\def\s{\sigma}
\def\FC{{\cal F}}
\def\pb{{\mathbf p}}
\def\wb{{\mathbf w}}
\def\ind{{\mathbbm{1}}}
\def\thb{\boldsymbol{\theta}}
\DeclareMathOperator{\VaR}{VaR}
\DeclareMathOperator{\CoVaR}{CoVaR}
\def\nbb{{\mathbb N}}
\def\pbb{{\mathbb P}}
\def\rbb{{\mathbb R}}
\def\nf{\infty}
\def\top{\buildrel\pbb\over\rightarrow}
\def\nt{\noindent}
\def\thesection{\arabic{section}}
\def\thesubsection{\arabic{section}.\arabic{subsection}}
\def\theequation{\arabic{section}.\arabic{equation}}
\def\theequation{\arabic{section}.\arabic{equation}}
\newtheorem{theorem}{Theorem}[section]
\newtheorem{lemma}[theorem]{Lemma}
\newtheorem{proposition}[theorem]{Proposition}
\newtheorem{corollary}[theorem]{Corollary}
\newtheorem{definition}{Definition}
\newtheorem{example}{Example}
\newtheorem{exercise}{Exercise}
\newtheorem{remark}{Remark}
\newtheorem{assumptions}{Assumptions}[section]
\definecolor{shadecolor}{gray}{0.9}
\newcommand{\blind}{0}
\begin{document}

\def\spacingset#1{\renewcommand{\baselinestretch}%
{#1}\small\normalsize} \spacingset{1}

\if0\blind
{
  \title{\bf An Extreme Value Approach to CoVaR Estimation}
  \author[1]{Natalia Nolde}
\author[2]{Chen Zhou}
\author[1]{Menglin Zhou}

\affil[1]{Department of Statistics, University of British Columbia, Canada}
\affil[2]{Erasmus School of Economics, Erasmus University, The Netherlands}
  \maketitle
} \fi

\if1\blind
{
  \bigskip
  \bigskip
  \bigskip
  \begin{center}
    {\LARGE\bf An Extreme Value Approach to CoVaR Estimation}
\end{center}
  \medskip
} \fi






\bigskip
\begin{abstract}

The global financial crisis of 2007-2009 highlighted the crucial role systemic risk plays in ensuring stability of financial markets. Accurate assessment of systemic risk would enable regulators to introduce suitable policies to mitigate the risk as well as allow individual institutions to monitor their vulnerability to market movements. One popular measure of systemic risk is the conditional value-at-risk (CoVaR), proposed in Adrian and Brunnermeier (2011). We develop a methodology to estimate CoVaR semi-parametrically within the framework of multivariate extreme value theory. According to its definition, CoVaR can be viewed as a high quantile of the conditional distribution of one institution's (or the financial system) potential loss, where the conditioning event corresponds to having large losses in the financial system (or the given financial institution). We relate this conditional distribution to the tail dependence function between the system and the institution, then use parametric modelling of the tail dependence function to address data sparsity in the joint tail regions. We prove consistency of the proposed estimator, and illustrate its performance via simulation studies and a real data example.
\end{abstract}

\nt{\it Key words:} systemic risk; multivariate extreme value theory; tail dependence function; regular variation; heavy tails; method of moments.
\vfill

\newpage
\spacingset{1.8}

\makeatletter
\newcommand{\leqnomode}{\tagsleft@true\let\veqno\@@leqno}
\newcommand{\reqnomode}{\tagsleft@false\let\veqno\@@eqno}
\makeatother
\section{Introduction}

The financial crisis of 2007-2009 revealed an important role systemic risk can play in destabilizing individual markets as well as the global economy. Accurate assessment of systemic risk would enable regulators to identify systemically important financial institutions and introduce suitable policies to mitigate the risk to the system coming from such institutions. For individual institutions, on the other hand, it is their vulnerability to extremal market movements that should be monitored and mitigated. Both of the above situations require a multivariate measure of risk that captures co-movements between a financial system (or market) and individual financial institutions. One popular measure of systemic risk is conditional value-at-risk (CoVaR); \cite{AdrianBrunnermeier2011}. In this paper, we propose a methodology to estimate CoVaR semi-parametrically within the framework of multivariate extreme value theory. This framework is suitable for heavy-tailed financial data, which exhibit what is known as tail or extremal dependence.

Loosely speaking, CoVaR is defined as a high quantile of the conditional distribution of the potential loss of a system proxy such as a market index (or a financial institution) conditional on the event that one institution (or the system) is in distress. The distress event corresponds to an institution experiencing a large loss in excess of a high quantile or the so-called Value-at-Risk (VaR). For a random variable $X$,  VaR at confidence level $1-p$, denoted $\VaR_X(p)$, is defined as $$\VaR_{X}(p) = \inf_x\bigl\{\pbb(X> x)\le p\bigr \},\qquad p\in(0,1).$$
Given two random variables  $X$ and $Y$, the CoVaR at level $1-p$, denoted $\CoVaR_{Y|X}(p)$, is defined as \bql{q1}\pbb\bigl( Y\ge \CoVaR_{Y|X}(p)\mid X\ge \VaR_X(p)\bigr) = p,\qquad p\in(0,1).\eql
This definition of CoVaR is adopted from \cite{Girardi2013} in that the conditioning distress event is given by the exceedence $\bigl\{X\ge \VaR_X(p)\bigr\}$ rather than by $\bigl\{X= \VaR_X(p)\bigr\}$ as originally proposed by \cite{AdrianBrunnermeier2011}. This definition leads to CoVaR being dependence consistent; \cite{MainikSchaanning2014}. The original definition makes it possible to do estimation using quantile regression. In the present paper, we interpret $X$ and $Y$ as losses of a financial institution and a system proxy.


\cite{NoldeZhang2018} proposed a semi-parametric EVT-based approach for CoVaR estimation, which is shown to provide a competitive alternative to flexible fully-parametric methods, such as the one described in \citet{Girardi2013}, while allowing for more relaxed model assumptions. One limitation of the \cite{NoldeZhang2018}'s approach is the requirement of multivariate regular variation on the random vector $(X,Y)$, which, in particular, imposes the restriction of the same tail index for both institutional and system losses. Another limitation is a somewhat restrictive parametric assumption on the extremal dependence structure. In this paper, we propose a more flexible framework requiring only for the system losses to have a regularly varying upper tail. This will, in particular, allow the two components of the underlying random vector to have different tail indices. Furthermore, a greater variety of tail dependence structures can be considered model selection.

In our approach we explore the connection between the definition of CoVaR in~\eqref{q1} and the tail dependence function, assuming existence of the latter. A genuine contribution in this approach is to define and estimate an adjustment factor which captures the impact of the dependence  at extremal levels on the CoVaR. With this adjustment factor, we can express the CoVaR as a quantile at an \emph{adjusted} confidence level of the unconditional distribution, rather than that of the conditional distribution, of system losses.

This approach allows us to break the modelling and estimation procedure into the following three components: (1) estimation of the tail dependence function; (2) computation of the adjustment factor, and (3) univariate high quantiles estimation for the system losses. Steps~(1) and~(3) may be handled in a variety of ways. For step~(1), we suggest a semi-parametric approach as a way to balance model uncertainty and estimation efficiency in view of data sparsity especially in the joint tail. That is, a suitable parametric model is to be chosen from a number of available models for the tail dependence function, with model parameters estimated using, for instance, the moment estimator of \cite{Einmahl_etal2012}. The adjustment factor in step~(2) can then be computed numerically by solving an equation involving the fitted tail dependence function. Finally, for step~(3), we adopt a common assumption of heavy-tailed losses and use an extreme value non-parametric high quantile estimator (\cite{Weissman1978}).

The rest of the paper is organized as follows. Section~\ref{sbg} provides background information on several  probabilistic concepts used in the sequel. In Section~\ref{smethod}, we detail the proposed methodology for CoVaR estimation, prove consistency of the new estimator and illustrate its performance in finite samples using several simulation studies. Section~\ref{sappl} is devoted to an application, in which we apply the proposed CoVaR estimator to time series data of daily losses for several financial institutions in order to quantify their systemic importance in the overall financial market. We also compare performance of our estimator to that of other competing approaches. Conclusions and final discussion are given in Section~\ref{sconc}. The data and R code to reproduce numerical results of the paper are available on GitHub \href{https://github.com/menglinzhou/msCoVaR}{https://github.com/menglinzhou/msCoVaR}.

\section{Background}\label{sbg}

In this section we review several fundamental concepts that will be used in the sequel to develop our methodology for CoVaR estimation.

Regularly varying functions are widely used in extreme value analysis, in particular, to conceptualize heavy-tailed behaviour of random variables and random vectors. A distribution function (df) $F$ on $\rbb$ with an infinite upper endpoint is  \emph{regularly varying} with index $\a>0$, written as $1-F\in\ RV_{-\a}$, if for all $x>0$ $$\lim_{t\to\nf}\dfrac{1-F(tx)}{1-F(t)} = x^{-\a}.$$
Examples of distributions with a regularly varying upper tail include Pareto-like distributions whose upper tail satisfies $$1-F(x)\sim cx^{-\a},\qquad x\to\nf,\qquad \a,c>0. $$
Univariate regular variation also characterizes the maximum domain of attraction of the Fr\'echet  distribution \citep{Gnedenko1943}.

An important aspect of modelling multivariate data is capturing their dependence structure. In the context of multivariate risk measures, the emphasis is on the tail dependence properties of the underlying random vector. There exist a number of analytical tools to describe the extremal dependence structure of a random vector, including the exponent measure and stable tail dependence function; see, e.g., \cite{dHF2006}. In our proposed approach, we make use of the (upper) tail dependence function.

\bdf Consider a random vector $(X,Y)$ with joint df $F$ and continuous margins $F_X,F_Y$. The df~$F$ is said to have the \emph{(upper) tail dependence function} $R$ if for all $x,y>0$, the following limit exists:
\bql{qTDF} \lim_{u\to 0}\frac{\pbb\bigl\{F_X(X)\geq 1-ux, F_Y(Y)\geq 1-uy\bigr\}}{u} =  R(x,y).\eql
\edf

Note that $R(1,1)$ is known in the literature as the upper tail dependence coefficient \citep{Joe1997} and is a popular measure of extremal dependence and risk contagion in finance \citep{QRM}. The case $R(1,1)=0$ is referred to as tail independence, and otherwise we have tail dependence.  

In the proposition below, we summarize several notable properties of the tail dependence function; for details, refer to \cite{dHF2006}, Chapter~6.1.5. 
\bpr\label{pTDF} Let $R$ denote an upper tail dependence function.
\begin{enumerate}[label=\arabic*)] 
\item $R$ is continuous.
\item $R$ is monotonically non-decreasing in each component.
\item $0\leq R(x,y)\leq x\wedge y$.
\item $R$ is homogeneous of order 1: $R(tx,ty) = tR(x,y)$ for any $t>0$.
\end{enumerate}
\epr

\section{Methodology}\label{smethod}

\subsection{Probabilistic framework}\label{sm1}

Let random variables $X$ and $Y$ represent losses of an institution and a system proxy, respectively. The key probabilistic assumptions underlying the proposed methodology for CoVaR estimation include existence of the upper tail dependence function $R$, not identically equal to zero, and that the system proxy random variable $Y$ has a heavy-tailed distribution: $1-F_Y\in RV_{-1/\g}$ for some $\g>0$. Note that no distributional assumptions are made on random variable $X$, losses of an institution.


We next introduce an adjustment factor $\h_p$ defined as
\bql{qeta}
\h_p = \dfrac{\pbb\bigl(Y\ge \CoVaR_{Y|X}(p)\bigr)}{\pbb\bigl(Y\ge \CoVaR_{Y|X}(p)\mid X\ge \VaR_X(p)\bigr)},\qquad p\in(0,1).
\eql
It then follows that $\pbb\bigl(Y\ge \CoVaR_{Y|X}(p)\bigr)= p\h_p$ and hence CoVaR is related to a quantile of the unconditional distribution of $Y$ via
\bql{qeta2}
\CoVaR_{Y|X}(p) = \VaR_Y( p \h_p)
\eql
with the adjusted quantile level $p \h_p$. If $X$ and $Y$ are independent, then CoVaR coincides with the VaR of $Y$ at the same level and $\h_p=1$. In the case of positive quadrant dependence, i.e., when $\pbb(X\ge x,Y\ge y)\ge \pbb(X\ge x)\pbb(Y\ge y)$ for $x,y\in\rbb$, we have $\h_p<1$ and CoVaR is equal to VaR at a higher confidence level determined by $\h_p$.

Going back to the definition of CoVaR in~\eqref{q1} and using \eqref{qeta2}, we have
$$\dfrac{\pbb\bigl(X>\VaR_X(p),\ Y>\VaR_Y(p \h_p)\bigr)}{p} = p. $$
At the same time, since in the risk measurement context the interest lies in small values of risk measure level $p$, the ratio above can be approximated using the tail dependence function in~\eqref{qTDF}:
$$\dfrac{\pbb\bigl(X>\VaR_X(p),\ Y>\VaR_Y(p \h_p)\bigr)}{p}=\dfrac{\pbb\bigl(F_X(X)>1-p,F_Y(Y)>1-p \h_p\bigr)}{p} \approx R(1,\h_p) $$
for values of $p$ sufficiently close to 0. This suggests a possibility of approximating the true adjustment factor $\h_p$ in \eqref{qeta} with an asymptotically determined approximation, denoted $\h_p^*$, which is defined implicitly via
\bql{qeta*} R(1,\h_p^*)=p.\eql
In situations where tail dependence function provides a good approximation of the dependence structure in the tail region, we expect $\h_p$ and $\h_p^*$ to be close. We prove this formally in the supplementary appendix and explore this approximation further in simulation studies in Section~\ref{sim}. 

As the tail dependence function is monotonically non-decreasing in each coordinate (see Proposition~\ref{pTDF}), it follows that 
$R(1,\h)$ is increasing from zero to the value of the tail dependence coefficient $R(1,1)$ for values of $\h$ from zero to one. Hence, a unique solution $\h_p^*$ to equation~\eqref{qeta*} exists provided that $p<R(1,1)$. In applications, $p$ will typically be taken to be small and so generally it will be possible to find the solution as long as dependence is not too close to the tail independence case. 

In Supplementary Appendix S3, we plot $R(1,\h)$ as a function of $\h$ for several models and notice that stronger tail dependence will lead to a smaller value of $\h_p^*$. Thus CoVaR is equivalent to the quantile of the unconditional distribution at a more extreme level $p\h_p^*$; see~\eqref{qeta2}. Regular variation of $1-F_Y$ can be used to give another approximation of CoVaR:
\bql{qRVa}
\CoVaR_{Y|X}(p)\approx \VaR_Y(p\h_p^*)\approx (\h_p^*)^{-\g}\VaR_Y(p)\qquad \text{for } p\ \text{close to zero.}
\eql
The above expression will be used as a basis for constructing an asymptotically motivated estimator of CoVaR. 

\subsection{Estimation}
\label{method:est}
Let $(X_1,Y_1),\ldots,(X_n,Y_n)$ be an i.i.d. sample from a distribution satisfying assumptions stated in Section~\ref{sm1}. Using ideas outlined above, we propose the following estimator of $\CoVaR_{Y|X}(p)$ for small values of $p$:
\bql{qEst}\widehat{\CoVaR}_{Y|X}(p) = (\hat{\eta}_p^*)^{-\hat{\gamma}}\ \widehat{\VaR}_Y(p).
\eql
Tail index $\gamma$ of the distribution of $Y$ assumed to have a regularly varying tail can be estimated using the Hill estimator \citep{Hill1975}: 
        $$\hat{\gamma} = \frac{1}{k_1} \sum_{i=1}^{k_1} \log Y_{n,n-i+1}-\log Y_{n,n-k_1}.$$
The choice of $k_1$ can be automatically decided with a two-step subsample bootstrap method in \citet{Danielsson_etal2001}.

$\widehat{\VaR}_Y(p)$ can be computed using a semi-parametric extreme quantile estimator \citep{Weissman1978}:
\begin{equation}\label{qVaRhat}
\widehat{\VaR}_Y(p) = Y_{n,n-k_2}\left(\frac{k_2}{np}\right)^{\hat{\gamma}},
\end{equation}
where the choice of the sample fraction $k_2$ typically aligns with that of $k_1$.

Finding~$\hat{\h}_p^*$ in~\eqref{qEst} requires an estimate of the tail dependence function. While a number of non-parametric estimators have been proposed in the literature, a parametric assumption on the form of $R$ will lead to efficiency gains in light of data sparsity in the tail region as well as will facilitate computation of an estimate of $\h_p^*$. Assuming a parametric model for the tail dependence function $R(\cdot)=R(\cdot;\thb)$, parameter~$\thb$  can be estimated using one of the methods available in the literature for this estimation problem. \cite{ColesTawn1991} and \cite{Joe_etal1992} apply maximum likelihood method, while \cite{LedfordTawn1996} and \cite{Smith1994} use a censored likelihood approach. \cite{Einmahl_etal2008} point out that these likelihood-based estimation methods require smoothness (or even existence) of the partial derivatives of the tail dependence function. Therefore, as an alternative, they propose an estimator based on the method-of-moments for dimension two, which requires a smaller set of conditions. In the simulation studies and subsequent data analysis, we adopt the method-of-moments (M-estimator) proposed in \cite{Einmahl_etal2008}. This M-estimator has been extended in \cite{Einmahl_etal2012} to be used in arbitrary dimensions and its consistency and asymptotic normality hold under weak conditions. 

We conclude this subsection with the definition of the M-estimator of the tail dependence function.  Let $R_i^X$ and $R_i^Y$ denote, respectively, the rank of $X_i$ among $X_1,...,X_n$ and the rank of $Y_i$ among $Y_1,..,Y_n$ for $i\in \{1,...,n\}$. A nonparametric estimator of the bivariate upper tail dependence function~$R$ is given by:
\begin{equation} \label{qnpR}
    \hat{R}_n(x,y):=\frac{1}{m}\sum_{i=1}^n\ind\left\{R_i^X\geq n+\frac{1}{2}-mx, R_i^Y\geq n+\frac{1}{2}-my \right\},
\end{equation}
where $m=m_n\in\{1,...,n\}$ is an intermediate sequence.

Suppose the function $R$ belongs to some parametric family $\{R(\cdot,\cdot; \thb):\thb \in \Theta\}$, where $\Theta\subset \rbb^p$ $(p\geq 1)$ is the parameter space. Let $g= (g_1,...,g_p)^T: [0,1]^2\to \rbb^p$ be a vector of integrable functions. Define function $\varphi: \Theta\to \rbb^p$ as:
\begin{equation} \label{phi_theta}
    \varphi(\thb):=\int\int_{[0,1]^2}g(x,y)R(x,y;\thb)dxdy.
\end{equation}
Let $\thb_0$ denote the true value of parameter $\thb$. The M-estimator $\hat{\thb}$ of $\thb_0$ is defined as a minimizer of the function (\cite{Einmahl_etal2012})
\begin{equation}
\label{S_mn}
    S_{m,n}(\thb)=\left|\left|\varphi(\thb)-\int\int_{[0,1]^2}g(x,y)\hat{R}_n(x,y)dxdy\right|\right|^2,
\end{equation}
where $||\cdot||$ is the Euclidean norm, and $\hat{R}_n(x,y)$ is the nonparametric estimator of~$R$ in~\eqref{qnpR}. The choice of $m$ and test function $g$ is discussed further in Section~\ref{sim}. 

Once we have $\hat\thb$, $\hat\eta^*_p$ in~\eqref{qEst} can obtained by solving 
\begin{equation} \label{Esteta*}
    R(1,\hat\eta^*_p;\hat\thb)=p.
\end{equation}

\subsection{Consistency}

In this section we state consistency of the proposed CoVaR estimator, and begin by imposing the necessary assumptions to guarantee this result.

Firstly, we present three assumptions that are necessary for consistency of the high quantile estimator in~\eqref{qVaRhat}; see, e.g., Theorem 4.3.8 in \cite{dHF2006}. The first one is the second order condition on the distribution function of $Y$, the second one is for the two intermediate sequences $k_j = k_j(n)$, $j = 1,2$, used in the estimator of VaR in~\eqref{qVaRhat}, and the third one is about the probability level $p = p(n)$.

Denote the quantile function $U_Y=(1/1-F_Y)^{\leftarrow}$, where $\cdot^{\leftarrow}$ is the left-continuous inverse. Then clearly $\VaR_Y(p) =U(1/p)$. 
\begin{enumerate}[label=\textbf{Condition} \Alph{enumi}., ref=Condition \Alph{enumi}, wide=0pt]
\item \label{con:soc} Assume that there exist a constant $\rho< 0$ and an eventually positive or negative function $A(t)$ such that as $t\to\infty$, $A(t)\to 0$ and for all $x>0$,
\begin{equation} \label{eq:soc} \lim_{t\to\infty}\frac{\frac{U_Y(tx)}{U_Y(t)}-x^{\gamma}}{A(t)}=x^{\gamma}\frac{x^{\rho}-1}{\rho}.
\end{equation}
This condition quantifies the speed of convergence in the definition of the heavy-tailedness.

\item \label{con:k} Assume that the intermediate sequences satisfy that as $n\to\infty$
\begin{equation}\label{eq:k assumption}
    k_j\to\infty,\; k_j/n\to 0\text{\ and\ } \sqrt{k_j}A\bigl(n/k_j)\bigr)\to\lambda_j\in\rbb,\quad j=1,2.
\end{equation}
\item \label{con:p} Assume that the probability level $p=p(n)$ is compatible with the intermediate sequences $k_j$, $j=1,2$ as follows:
\begin{equation} \label{eq:condition on p}
   \frac{k_2}{n p}\to\infty \text{\ and \ } \frac{\sqrt{k_1}}{\log(k_2/np)}\to \infty,\quad \text{as } n\to\infty.
\end{equation}
\end{enumerate}

Next, we give the assumptions from Theorem 4.1 in \cite{Einmahl_etal2012}, which guarantee the existence, uniqueness and consistency of M-estimator $\hat\thb$.

\begin{enumerate}[label=\textbf{Condition} \Alph{enumi}., ref=Condition \Alph{enumi}, wide=0pt]
\addtocounter{enumi}{+3}
    \item \label{con:varphi} (i) The function $\varphi$ defined in~\eqref{phi_theta} is homeomorphism from $\Theta \to \rbb^p$ and there exists $\epsilon_0 > 0$ such that the set $\{\thb \in \Theta: ||\thb - \thb_0||\leq \epsilon_0\}$ is closed; (ii) $\thb_0$ is in the interior of the parameter space $\Theta$, $\varphi$ is twice continuously differentiable and the total derivative of $\varphi$ at $\thb_0$ is of full rank.
\end{enumerate}

Last but not least, we impose two conditions on the tail dependence function $R$ in \eqref{qTDF}. The first one aims at controlling the speed of convergence to the limit in~\eqref{qTDF} by a power function and the second one is about the partial derivative of $R$: $R_2(x,y;\thb):=\partial R(x,y;\thb)/\partial y $.
\begin{enumerate}[label=\textbf{Condition} \Alph{enumi}., ref=Condition \Alph{enumi}, wide=0pt]
\addtocounter{enumi}{+4}
    \item \label{con:soc for R} There exists a constant $\tilde\rho>0$, such that as $u\to 0$, uniformly for all $(x,y)\in[0,1]^2\setminus\{(0,0)\}$
\begin{equation}\label{eq:soc for R}
    \frac{1}{u}\pbb\bigl\{F_X(X)>1-ux,F_Y(Y)>1-uy\bigr\}-R(x,y)=O(u^{\tilde\rho}).
\end{equation}
Note that a similar condition has been assumed for the M-estimator for $\thb$, see assumption (C1) in \cite{Einmahl_etal2012}.

\item \label{con:R2} For all $\thb\in \Theta$, the partial derivative $R_2(x,y;\thb)$ is continuous with respect to $y$ in the neighborhood of $(1,0;\thb)$ and $R_2(1,0;\thb)>0$.
\end{enumerate}
Notice that we are going to handle $R(1,\eta^*_p)=p$ as in \eqref{qeta*}. As $p\to 0$, \ref{con:R2} ensures that  $\eta^*_p\to 0$ with the same speed as $p$ for all tail dependence functions in the parametric family.

Consider the CoVaR estimator defined in \eqref{qEst}. Here, $\hat\gamma$ is estimated by the Hill estimator; $\widehat{\VaR}_Y(p)$ is estimated using \eqref{qVaRhat} and $\hat\eta^*_p$ is estimated with \eqref{Esteta*}.
The following theorem shows consistency of the CoVaR estimator defined in \eqref{qEst}. The proof is given in Supplementary Appendix S1.
\begin{theorem} 
\label{main theorem}
    Assume that \ref{con:soc}-\ref{con:R2} hold. In particular, \ref{con:soc for R} holds with $\tilde\rho>1$. Then, as $n\to\infty$,
    $$\frac{\widehat{\CoVaR}_{Y|X}(p)}{\CoVaR_{Y|X}(p)}\top 1.$$
\end{theorem}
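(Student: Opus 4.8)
By the exact identity~\eqref{qeta2} we have $\CoVaR_{Y|X}(p)=\VaR_Y(p\,\h_p)=U_Y\!\bigl(1/(p\h_p)\bigr)$, while by~\eqref{qVaRhat} the estimator~\eqref{qEst} is itself a Weissman quantile estimator, $\widehat{\CoVaR}_{Y|X}(p)=Y_{n,n-k_2}\bigl(k_2/(np\,\hat{\h}^*_p)\bigr)^{\hat\g}$, at the (random) adjusted level $p\,\hat{\h}^*_p$. The plan is to show the target ratio tends to $1$ through the multiplicative decomposition
\begin{equation*}
\frac{\widehat{\CoVaR}_{Y|X}(p)}{\CoVaR_{Y|X}(p)}
=\underbrace{\frac{\widehat{\VaR}_Y(p)}{\VaR_Y(p)}}_{(\mathrm A)}\;
\underbrace{\frac{(\hat{\h}^*_p)^{-\hat\g}}{\h_p^{-\g}}}_{(\mathrm B)}\;
\underbrace{\frac{\h_p^{-\g}\,\VaR_Y(p)}{\VaR_Y(p\,\h_p)}}_{(\mathrm C)},
\end{equation*}
and proving that each of $(\mathrm A),(\mathrm B),(\mathrm C)$ converges to $1$ in probability. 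The advantage of routing the index/level adjustment through $(\mathrm B)$ and $(\mathrm C)$ is that factor $(\mathrm A)$ is then a Weissman ratio at the original level $p$, to which \ref{con:p} applies verbatim.

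Factors $(\mathrm A)$ and $(\mathrm C)$ are the routine part. Under \ref{con:soc}--\ref{con:p}, factor $(\mathrm A)\top1$ is precisely the consistency of the Weissman extreme-quantile estimator at level $p$ (Theorem~4.3.8 of \cite{dHF2006}). Factor $(\mathrm C)$ equals $\bigl[U_Y(ts)/(s^{\g}U_Y(t))\bigr]^{-1}$ with $t=1/p\to\infty$ and $s=1/\h_p\to\infty$; because $s$ diverges, plain uniform convergence on compacts is insufficient, so I would invoke the second-order condition \ref{con:soc} and the associated Potter-type inequality. Since $\r<0$, the resulting correction is $A(1/p)$ times a factor that stays bounded as $s\to\infty$, whence $(\mathrm C)\top1$.

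Factor $(\mathrm B)$ carries the real content. Writing it as $\exp\bigl\{-\hat\g\log(\hat{\h}^*_p/\h_p)-(\hat\g-\g)\log\h_p\bigr\}$, I would treat the two exponents in turn. For the first, $\hat{\h}^*_p/\h_p\top1$: by \ref{con:R2} the map $\eta\mapsto R(1,\eta;\thb)$ satisfies $R(1,\eta;\thb)\sim R_2(1,0;\thb)\,\eta$ as $\eta\to0$, so comparing the defining equations $R(1,\h^*_p;\thb_0)=p=R(1,\hat{\h}^*_p;\hat\thb)$ gives $\hat{\h}^*_p/\h^*_p\sim R_2(1,0;\thb_0)/R_2(1,0;\hat\thb)\top1$ once $\hat\thb\top\thb_0$, which holds by \ref{con:varphi} (Theorem~4.1 of \cite{Einmahl_etal2012}); combined with the approximation $\h^*_p/\h_p\to1$ proved in the supplementary appendix under \ref{con:soc for R} with $\tilde\r>1$, this yields $\hat{\h}^*_p/\h_p\top1$. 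As $\hat\g=O_\pbb(1)$, the first exponent then tends to $0$ in probability.

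The main obstacle is the second exponent, $(\hat\g-\g)\log\h_p$. By \ref{con:R2} the factor $\h_p$ vanishes at the same rate as $p$, so $\log\h_p\sim\log p=-\log(1/p)\to-\infty$: this term pairs the small estimation error $\hat\g-\g=O_\pbb\bigl(k_1^{-1/2}\bigr)$ with a \emph{diverging} logarithm, and forcing the product to $0$ requires $\log(1/p)=o(\sqrt{k_1})$. Splitting $\log(1/p)=\log(n/k_2)+\log(k_2/np)$, the second summand is exactly the quantity controlled by \ref{con:p}, and the first is of smaller order than $\sqrt{k_1}$ under \ref{con:k} (and for the polynomial choices of $k_j$ used in practice); together these give $(\hat\g-\g)\log\h_p\top0$, hence $(\mathrm B)\top1$. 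This balance between the depth of extrapolation (how small $p\,\h_p$ is) and the precision $\sqrt{k_1}$ of the Hill estimator is the quantitative heart of the proof and the reason \ref{con:p} takes its stated form. Assembling $(\mathrm A)$, $(\mathrm B)$ and $(\mathrm C)$ completes the argument.
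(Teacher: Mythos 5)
Your decomposition is, after regrouping, identical to the paper's: your factor $(\mathrm A)$ is the paper's $I_3$, your $(\mathrm C)$ is its $I_4$, and your $(\mathrm B)$ equals the product $I_1\times I_2$, which you then re-split inside the exponential into precisely those two pieces ($-\hat\gamma\log(\hat\eta^*_p/\eta_p)$ is $\log I_1$ and $-(\hat\gamma-\gamma)\log\eta_p$ is $\log I_2$). Your handling of $(\mathrm A)$ (Weissman consistency, Theorem 4.3.8 of \cite{dHF2006}) and of $(\mathrm C)$ (second-order condition with uniform Potter-type bounds, using $\rho<0$) coincides with the paper's treatment of $I_3$ and $I_4$; on $(\mathrm C)$ you are in fact more explicit than the paper about why the condition may be applied along the moving sequence $x=1/\eta_p\to\infty$.

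The genuine gap is in $(\mathrm B)$: you cite the relation $\eta^*_p/\eta_p\to1$ as ``proved in the supplementary appendix''. That statement is Lemma~\ref{lem:eta and eta*} of the supplement, i.e.\ a component of the very proof you are reconstructing, so it cannot be invoked; it must be proved. It is also the only place where the theorem's explicit hypothesis that \ref{con:soc for R} holds with $\tilde\rho>1$ enters, so an argument that never actively uses this hypothesis is necessarily incomplete. The missing proof runs as follows: first show $\eta_p\to0$ (by contradiction, from monotonicity of $R(1,\cdot\,;\thb_0)$ and \ref{con:R2}, since the defining equation forces $R(1,\lim\eta_p;\thb_0)=0$); then note that the definition of $\eta_p$ gives $\pbb\bigl(F_X(X)>1-p,\ F_Y(Y)>1-p\eta_p\bigr)=p^2$, so \ref{con:soc for R} applied with $u=p$ and $(x,y)=(1,\eta_p)$ yields $R(1,\eta_p;\thb_0)=p+O(p^{\tilde\rho})$; comparing this with $R(1,\eta^*_p;\thb_0)=p$ through the mean value theorem and $R_2(1,0;\thb_0)>0$ gives $|\eta_p-\eta^*_p|=O(p^{\tilde\rho})$, and since $\eta^*_p\asymp p$, the ratio tends to one precisely because $\tilde\rho>1$. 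Note that both halves of your factor $(\mathrm B)$ rest on this lemma: the first exponent needs $\hat\eta^*_p/\eta_p\top1$ and not merely $\hat\eta^*_p/\eta^*_p\top1$, and your claim in the second exponent that $\eta_p$ ``vanishes at the same rate as $p$'' does not follow from \ref{con:R2} alone, which controls $\eta^*_p$ but not $\eta_p$. The preliminary facts $\eta^*_p\to0$ and $\hat\eta^*_p\to0$, used implicitly when you linearize $R$, also require (easy) contradiction arguments of the same type.

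A final remark: your hedge that $\log(n/k_2)=o(\sqrt{k_1})$ holds ``for the polynomial choices of $k_j$ used in practice'' is not a deviation from the paper, which asserts that this same step follows from \ref{con:k}. It does when $\lambda_1\neq0$ (then $\log n=O(\log k_1)$ by regular variation of $|A|$), but it is genuinely delicate when $\lambda_1=0$; on this point your argument and the paper's stand or fall together.
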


\subsection{Simulation studies}\label{sim}

Several simulation studies are conducted in order to assess finite sample properties of the proposed CoVaR estimator. 

To evaluate the CoVaR estimator in~\eqref{qEst} for a given value of risk level $p$, we need to obtain estimates of the tail index parameter $\g$, adjustment factor $\h_p^*$ through the estimate of the parameters of the assumed tail dependence function, and  $\VaR_Y(p)$, the $(1-p)$-quantile of the distribution of $Y$. These three components will naturally all have an impact on the performance  of the CoVaR estimator. Thus, we also report the behaviour of the individual components that comprise the CoVaR estimator to better understand main sources of its bias and estimation uncertainty. 

The assessment is based on 100 Monte Carlo replications at risk level $p=5\%$. To make our estimation procedure automatic for the purpose of simulation studies, we set $k_2 = k_1$ and adopt the bootstrap method in \cite{Danielsson_etal2001} to choose the sample fraction $k_1$ in the Hill estimator of $\g$. 

The samples are simulated from the following five distributions; see Supplementary Appendix S2 for details: 
\begin{enumerate}[label=(\arabic*)]
    \item Bivariate logistic distribution with dependence parameter $\theta\in (0,1]$.
    
    \item Bivariate H\"{u}sler-Reiss (HR) distribution (\cite{HuslerReiss1989}) dependence parameter $\theta >0$.

    \item Bivariate bilogistic distribution(\cite{Smith1990}) parameters $\alpha, \beta\in (0,1)$.

    \item Bivariate asymmetric logistic distribution with dependence parameter $\theta \in (0,1]$ and asymmetry parameters $\phi_1,\phi_2 \in [0,1]$.
    
    \item Standard bivariate $t$ distribution with $\n>0$ degrees of freedom and correlation parameter $\r\in(-1,1)$.

\end{enumerate}

The first four models belong to the class of bivariate extreme value distributions, and hence the tail dependence function for these distributions gives the exact representation of the underlying dependence structure. We let the margins of the first four models to be standard Fr\'{e}chet distribution. The bivariate $t$ distribution is multivariate regularly varying, and consecutively lies in the domain of attraction of a bivariate extreme value distribution with  Fr\'{e}chet margins (see, e.g., Example 5.21 in \cite{Resnick1987}). In this case, the tail dependence function approximates the dependence structure of the underlying distribution in the joint tail region.

The settings of the simulation studies are summarized in Table~\ref{sim:setup}. In particular, for each model used for data generation, we indicate the values of the model parameters, the size of the samples and give specification for the M-estimation of the parameters of the tail dependence function including the value of parameter~$m$ and test function~$g(x,y)$. 

We allow the sample sizes to differ across different models roughly guided by the dimension of parameter space~$\Theta$. For the first three distributions, the sample size is set to $n= 2000$, and we use $n=2500$ for the 3-parameter asymmetric logistic distribution. For the bivariate $t$ distribution, we consider a larger sample size of $n=3000$ as the tail dependence function only gives an approximation of the true dependence structure and estimation of the parameters appears to be more challenging. 

The choice of tuning parameter~$m$ and function~$g(x,y)$ for carrying out M-estimation of the parameters of the tail dependence function is based on the guidance given in \cite{Einmahl_etal2012}. As the M-estimator is sensitive to the value of $m$, its choice in simulation studies is made on the basis of the behaviour of the bias and root mean squared error (RMSE). Larger values of $m$ lead to increase in the absolute value of the bias. However, in most cases, $m$ can be chosen so as to minimize the RMSE. These ``optimal" values of $m$, provided in Table~\ref{sim:setup}, are subsequently employed in CoVaR estimation. The choice of function~$g(x,y)$ does not exert much influence on M-estimator and can be taken to have a simple form to facilitate computation. Our specific choices are presented in Table~\ref{sim:setup}.

\begin{table}[H]
\footnotesize
\centering
\caption{Set-up of the simulation studies including distributions for data generation, their parameter values, sample size, and specifications for M-estimation of the parameters of the tail dependence function, including parameter~$m$ and test function~$g(x,y)$.}
\vspace{24pt}
\begin{tabular*}{1\textwidth}{@{\extracolsep{\fill}} c|c |c |c |c}\hline\hline
Model & Parameters & $n$ & $m$ & $g(x,y)$\\
\hline\hline
Logistic & $\q=0.6$ & 2000 & 180 & $g(x,y)=1$\\\hline
HR & $\q=2.5$ & 2000 & 280 & $g(x,y)=x$\\\hline
Bilogistic & $(\a,\b) = (0.4,0.7)$ & 2000 &180 & $g(x,y)=(1,x)^T$\\\hline
Asymmetric logistic &  $(\q,\j_1,\j_2)=(0.6,0.5,0.8)$ & 2500 &180 & $g(x,y)=(1, x, 2x+2y)^T$\\\hline
Bivariate t & $(\n,\r)=(5,0.6)$ & 3000 & 100 & $g(x,y) = (x, x+y)^T$\\\hline\hline
\end{tabular*}
\label{sim:setup}
\end{table}

The summary statistics of the CoVaR estimates are reported in Table~\ref{sim:summary}. The first row gives the true values of $\CoVaR_{Y|X}(p)$ under the various considered models, computed by finding the quantile of the conditional distribution, which is given as the solution to equation $h(y)=p^2$ with
\begin{equation}
    h(y) = \int_{\{(u,v)\in\rbb^2: u>\VaR_X(p), v > y\}}f(u,v)dudv,
\end{equation}
where $\VaR_X(p)$ is the value of the $(1-p)$-quantile of the distribution of $X$, and $f(x,y)$ is the joint density function of random vector~$(X,Y)$. As both mean and median of the estimates exceed the true CoVaR value, these results reveal the tendency of the proposed estimator to overestimate the true value. However, approximate 95\% confidence intervals based on asymptotic normality of the sample mean do cover the true values. From the applied perspective, the proposed estimation procedure offers a conservative estimator of systemic risk as measured by CoVaR. 

\begin{table}[H]
\footnotesize
\centering
\caption{Summary statistics of CoVaR estimates at level $p=0.05$ for simulation settings specified in Table~\ref{sim:setup}. The first row gives the true value of CoVaR under each model. The bottom three panels correspond to CoVaR estimates when the specified component is held at the true value rather than being estimated.}
\vspace{24pt}
\begin{tabular*}{1\textwidth}{@{\extracolsep{\fill}} c|ccccc}\hline\hline
       & logistic & HR       & bilogistic & asymmetric logistic & bivariate t      \\ \hline\hline
$\CoVaR_{Y|X}(0.05)$ & 367.31 & 399.48 & 341.52   & 281.49            & 4.42 \\  
\hline
   \multicolumn{6}{c}{Full estimator} \\   \hline
Mean  & 446.34 & 463.40 & 460.94   & 327.75            & 4.50 \\ 
Median & 425.50 & 456.33 & 434.50   & 314.42            & 4.48 \\ 
Standard deviation     & 127.92 & 130.75 & 149.86   & 83.40             & 0.55 \\ \hline
 \multicolumn{6}{c}{True $\gamma$} \\   \hline
Mean  & 325.14 & 350.26 & 352.88   & 253.01            & 4.13 \\ 
Median & 324.57 & 345.26 & 351.78   & 251.79            & 4.10 \\ 
Standard deviation     & 29.44 & 31.81 & 34.44   & 26.58             & 0.46 \\ 
\hline
 \multicolumn{6}{c}{True $\eta^*_p$} \\   \hline
Mean  & 436.37 & 463.38 & 459.72   & 311.55            & 4.26 \\ 
Median & 415.40 & 456.32 & 433.12   & 312.67            & 4.28 \\ 
Standard deviation     & 117.98 & 130.73 & 148.33  & 69.19             & 0.48 \\ \hline 
\multicolumn{6}{c}{True $\eta_p$} \\   \hline
Mean  & 439.39 & 463.39 & 394.40   & 320.69            & 4.47 \\ 
Median & 418.26 & 456.33 & 372.40   & 321.78            & 4.48 \\ 
Standard deviation     & 118.98 & 130.73& 122.51   & 71.77             & 0.50 \\ \hline\hline
\end{tabular*}
\label{sim:summary}
\end{table}

We have further investigated the sources of bias and variance of the CoVaR estimator. In particular, the panel in Table~\ref{sim:summary} labelled ``True $\g$" presents summary statistics of CoVaR estimates with the tail index kept at its true value, rather than being estimated. Here we observe a substantial bias and variance reduction. The bottom two panels show results based on the true values of the approximate adjustment factor $\h_p^*$ and the exact adjustment factor $\h_p$. The value of $\eta_p^*$ is computed by solving equation $R(1,\eta_p^*;\thb) = p$, while $\h_p$ is evaluated from the following expression:
\begin{equation}
    \eta_p = \frac{\pbb\left\{Y > \CoVaR_{Y|X}(p)\right\}}{p}.
\end{equation}
Little difference can be attributed to the use of $\h_p$ instead of $\h_p^*$. And while, as expected, the use of true values or either $\h_p$ or $\h_p^*$ reduces both the bias and standard deviation of the CoVaR estimator, the reductions are modest compared to those of the tail index parameter. Hence, we can conclude that it is the estimator of $\g$ that is largely responsible for the bias and variability of the proposed CoVaR estimator.

A more detailed view of the performance of the CoVaR estimator as well as its components is given by sampling density plots in Figure~\ref{sim:plot}. In the rightmost panels, we indicate the first approximation of CoVaR, $\CoVaR^*_{Y|X}(p):=\VaR_Y(p\h_p^*)$, based on the true values of the adjustment factor $\h_p^*$ and ($1-p\h_p^*$)-quantile of the distribution of~$Y$; see~\eqref{qRVa}. Note that,  apart from being used to explore performance of estimators, distances between $\eta_p$ and $\eta_p^*$ (see 
the second panels), and $\CoVaR_{Y|X}(p)$ and $\CoVaR^*_{Y|X}(p)$ can also be used to indicate how well the upper tail dependence function approximates the conditional tail probability when $p$ is small. Based on the sampling densities, we observe the presence of a small positive bias in the Hill estimator of the tail index $\g$. This could potentially be remedied by the use of a bias-corrected estimator. The estimator of the adjustment factor $\h_p^*$ tends to perform fairly well across all models, with only a modest negative bias visible for the logistic and bivariate t distributions. Furthermore, the difference between the exact value of the adjustment factor $\h_p$ and its approximation $\h_p^*$ is quite small relative to the sampling variability of the estimator under all models but the bilogistic distribution (see Figure~\ref{sim:plot}(c)). In the latter case, the visual distance may be attributed to fairly small variability of the estimator. Comparison of CoVaR estimates based on $\h_p$ and $\h_p^*$ reveals only a minor impact of the observed discrepancy in $\h_p$ and $\h_p^*$ values. It is interesting to note that, in the case of the asymmetric logistic and bivariate t distributions, $\hat\h_p^*$ appears to be more accurate for the exact value $\h_p$ rather than $\h_p^*$, although the overall influence of $\hat\h_p^*$ on the estimation of CoVaR appears to be quite small. The final component of the CoVaR estimator is the estimator of $\VaR_Y(p)$. As typical for a high quantile estimator, the sampling density displays pronounced skewness to the right. However, the mode tends to coincide well with the true value of the quantile. Here the proposed CoVaR estimator naturally inherits properties of a high quantile estimator but with variability further amplified due to extrapolation to an even more extreme quantile level.

\begin{figure}[H]
  \centering 
  \subfigure[Logistic model]{
    \includegraphics[width=0.98\linewidth]{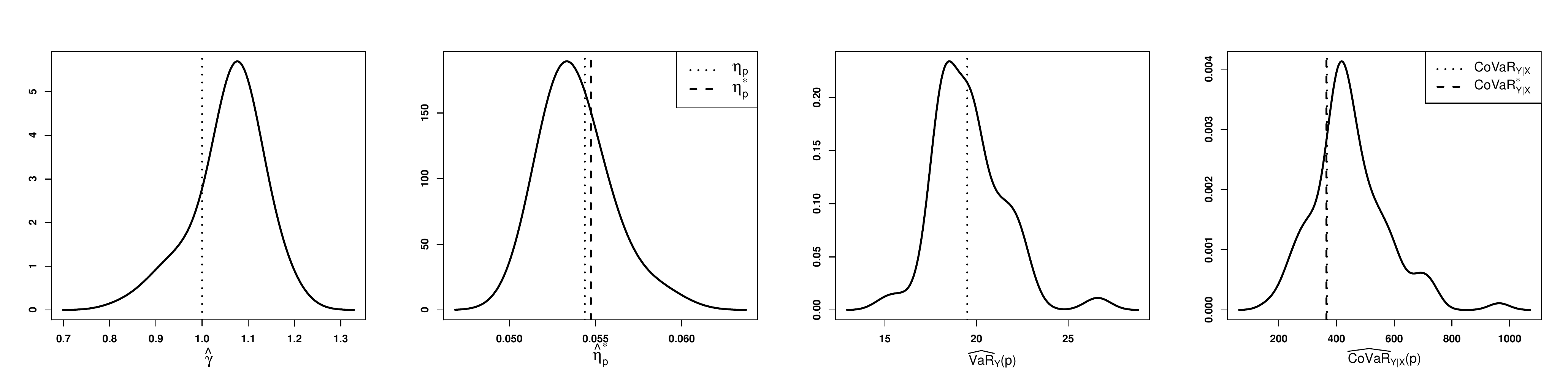}}
    \end{figure}
    \begin{figure}[H]
    \subfigure[HR model]{
    \includegraphics[width=0.98\linewidth]{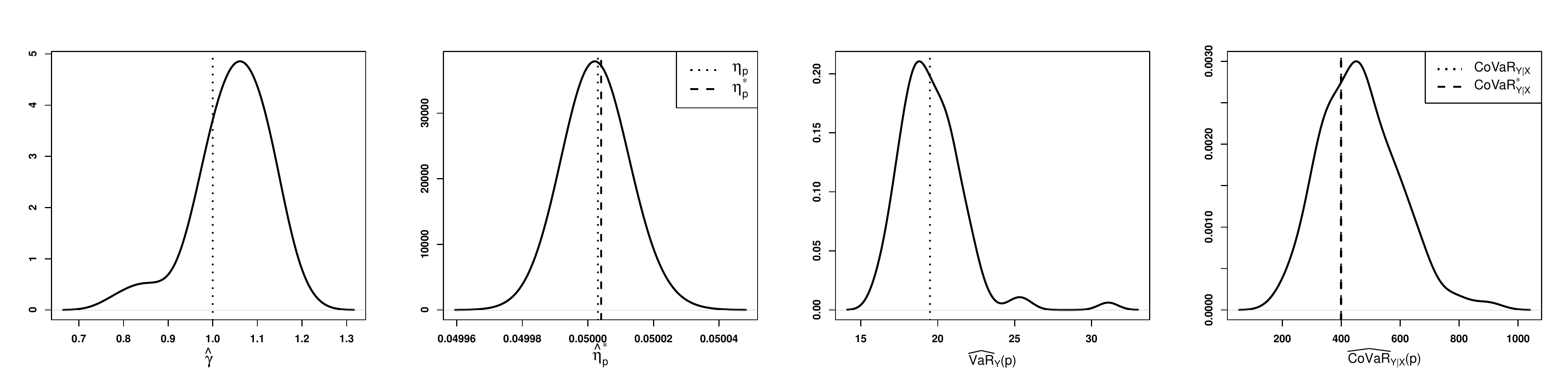}}
    \end{figure}
    \begin{figure}[H]
    \subfigure[Bilogistic model]{
    \includegraphics[width=0.98\linewidth]{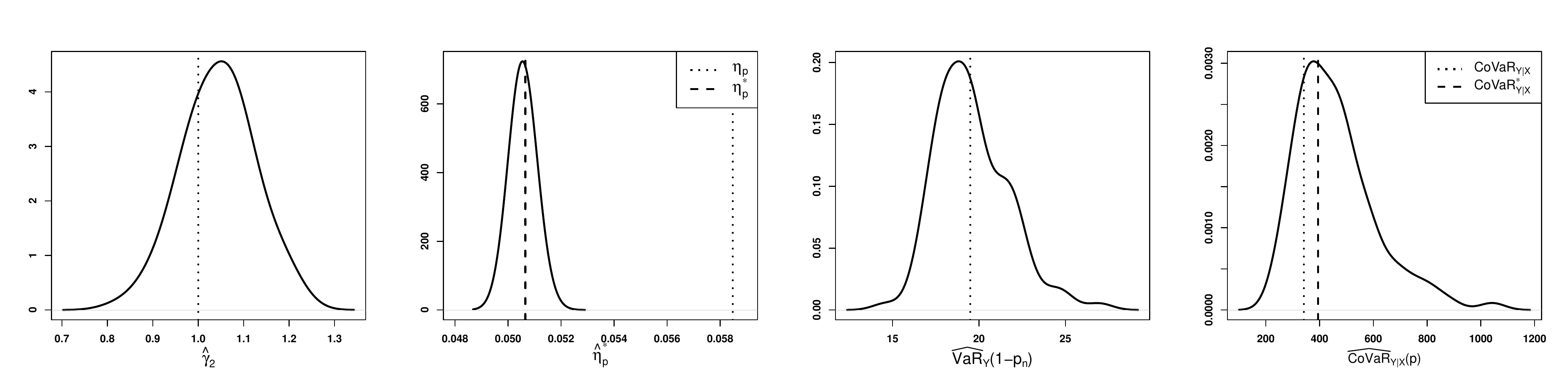}}
    \end{figure}
    \begin{figure}[H]
    \subfigure[Asymmetric logistic model]{
    \includegraphics[width=0.98\linewidth]{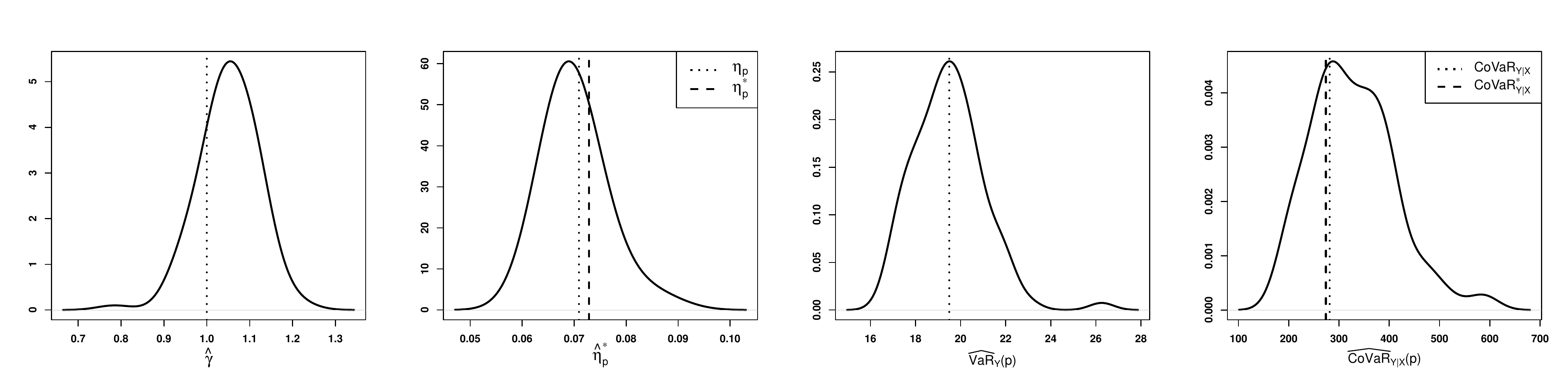}}
    \end{figure}
    \begin{figure}[H]
    \subfigure[bivariate t model]{
    \includegraphics[width=0.98\linewidth]{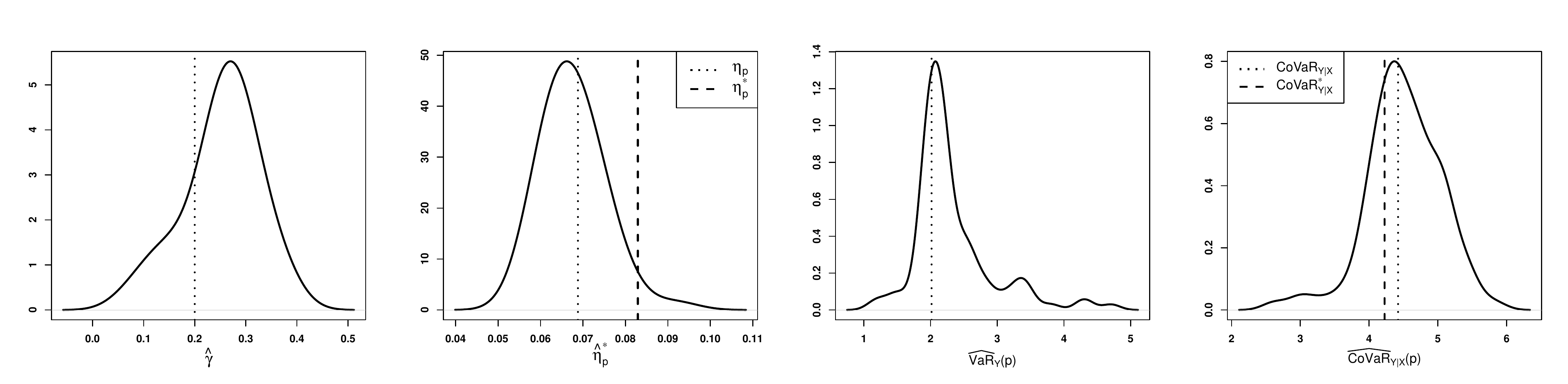}}
  \caption{The sampling densities of estimates of $\gamma$,  $\eta_p^*$, $\VaR_Y(p)$; see Table~\ref{sim:setup} for simulation settings.}
  \label{sim:plot}
\end{figure}

\subsection{Extension of CoVaR definition to different risk levels}

It is possible to extend the definition of CoVaR to allow for different risk levels in the conditioning event $\{X\ge\VaR_X(p_1)\}$ and in the event of $Y$ exceeding the CoVaR; see \citet{MainikSchaanning2014}. In this case, we define CoVaR at level~$\pb = (p_1,p_2)$, denoted $\CoVaR_{Y|X}(p_1,p_2)$, as the $(1-p_2)$-quantile of the conditional loss distribution:
\bql{qCoVaRe}
\pbb\bigl(Y\ge \CoVaR_{Y|X}(p_1,p_2)\mid X\ge\VaR_X(p_1)\bigr) = 1-p_2,\qquad p_1,p_2\in(0,1).
\eql
This extended definition of CoVaR is useful in applications as it allows to consider a less extreme risk level for CoVaR in comparison to the conditioning event (with $p_2$ larger than $p_1$), which results in more observations being available for model validation and backtesting. 

To modify the estimator of CoVaR for the extended definition, we follow steps analogous to those presented in Section~\ref{sm1}. In particular, we note that
$$\CoVaR_{Y|X}(p_1,p_2) = \VaR_Y( p_2\h_\pb)\quad \text{with}\quad \h_\pb=\dfrac{\pbb\bigl(Y\ge \CoVaR_{Y|X}(p_1,p_2)\bigr)}{\pbb\bigl(Y\ge \CoVaR_{Y|X}(p_1,p_2)\mid X\ge\VaR_X(p_1)\bigr)}. $$
Hence, the following equality holds based on the definition of CoVaR:
$$\dfrac{\pbb\bigl(X\ge\VaR_X(p_1),\ Y\ge \CoVaR_{Y|X}(p_1,p_2)\bigr)}{p_1} = p_2, $$
and applying the marginal probability integral transforms leads to the approximation:
$$\dfrac{\pbb\Big(F_X(X)\ge 1- p_1,\ F_Y(Y)\ge 1-\h_\pb\frac{p_2}{p_1}p_1\Big)}{p_1}\approx R\Big(1,\h_\pb\frac{p_2}{p_1}\Big)\qquad \text{for } p_1\approx 0.$$
We can then define the approximate adjustment factor $\h_\pb^*$ via equation
\bql{qeta*2}R\Big(1,\h_\pb^*\frac{p_2}{p_1}\Big) = p_2. \eql
These steps then suggest the following estimator for CoVaR at level $\pb = (p_1,p_2)$:
\begin{equation}
    \label{covar_hat}
    \widehat{\CoVaR}_{Y|X}(p_1,p_2) = \widehat{\VaR}_Y(1-p_2)(\hat{\eta}_\pb^*)^{-\hat{\gamma}} = Y_{n,n-k_2} \Big(\dfrac{k_2}{np_2}\Big)^{\hat{\gamma}} (\hat{\eta}_\pb^*)^{-\hat{\gamma}},
\end{equation}
where $\hat{\gamma}$ is the Hill estimator of $\gamma$ with sample fraction $k_1$. Estimation of $\h_\pb^*$ is carried out in the same way as discussed earlier via parametric estimation of the tail dependence function and subsequently solving equation~\eqref{qeta*2}. 

\section{Application}\label{sappl}

In this section, we illustrate how the CoVaR estimation methodology presented in the previous section can be utilized to produce dynamic CoVaR forecasts using financial time series. In addition, we compare the proposed methodology with the fully-parametric method of \cite{Girardi2013} and the EVT-based method of \cite{NoldeZhang2018}.

\subsection{Data description}\label{data}

In our application, we consider 14 financial institutions studied in \cite{Acharya_etal2017} with a market capitalization in excess of 5 billion USD as of the end of June 2007, including AFLAC INC (AFL), AMERICAN INTERNATIONAL GROUP INC (AIG), ALLSTATE CORP (ALL), BANK OF AMERICA CORP (BAC), HUMANA INC (HUM), J P MORGAN CHASE \& CO (JPM), LINCOLN NATIONAL CORP (LNC), M B I A INC (MBI), PROGRESSIVE CORP OH (PGR), U S A EDUCATION INC (SLM), TRAVELERS COMPANIES INC (TRV), UNUMPROVIDENT CORP (UNM), WELLS FARGO \& CO NEW (WFC), WASHINGTON MUTUAL INC (WM). The S\&P 500 index (GSPC) is used as a system proxy. The sample period is from January 1, 2000 to December 30, 2021, consisting of 5535 daily closing price records for each time series. The daily losses (\%) were calculated as negative log returns. Figure~\ref{fig:time series} gives the time series plots of daily losses for one of the institutions (AFL) and the GSPC index, both displaying a
typical behaviour for financial time series including periods of volatility clustering such as the one during the global financial crisis of 2007--2009.

In selection of financial institutions for the data analysis, we considered the length of their available data records as well as compliance with model assumptions for the proposed method. In addition, we chose to include institutions for which tail index estimates differed from that of the S\&P 500 index.

\begin{figure}[ht]
  \centering 
   \subfigure[AFL]{
    \includegraphics[width=0.45\linewidth]{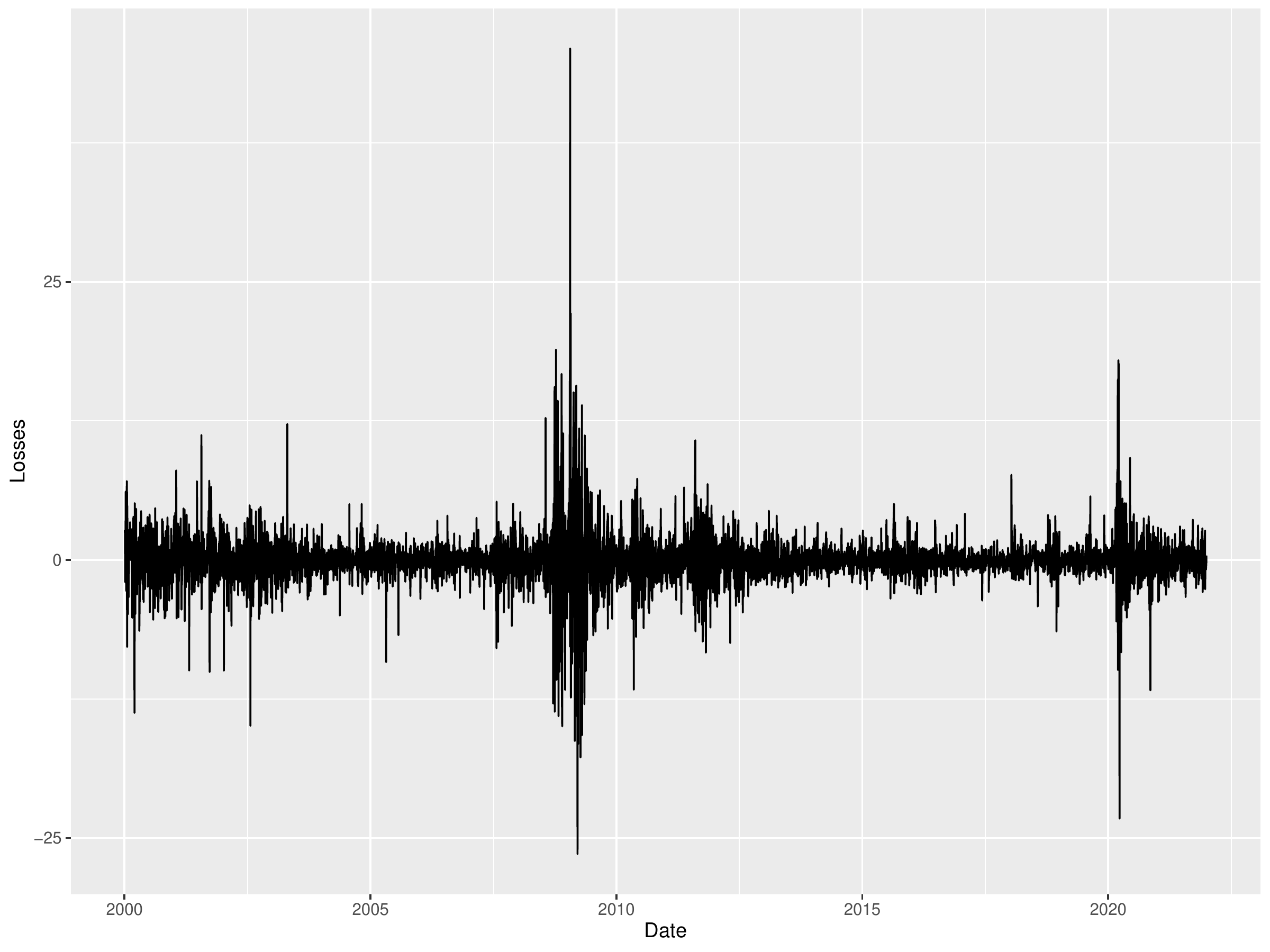}}
    \subfigure[S\&P 500]{
    \includegraphics[width=0.45\linewidth]{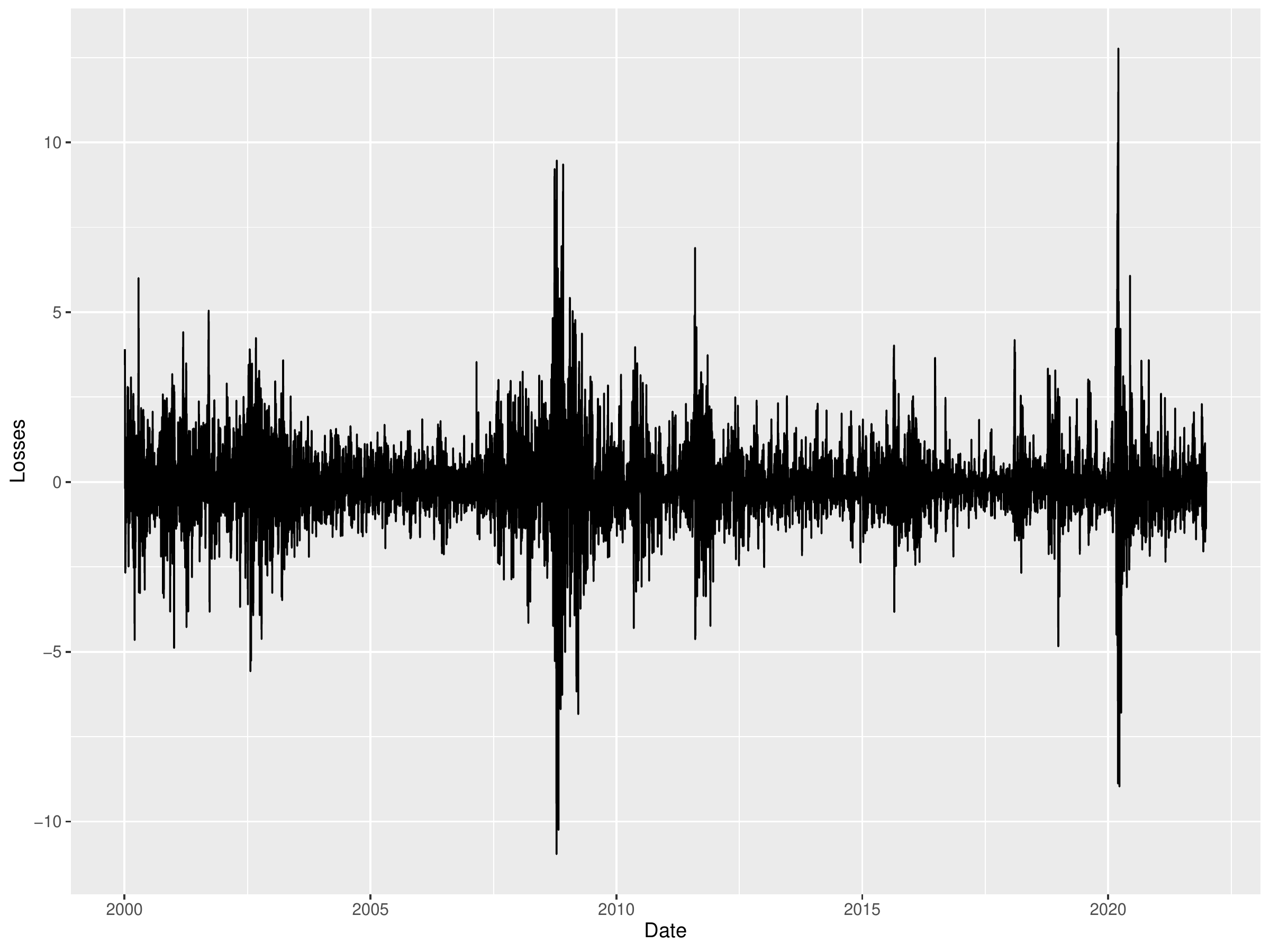}}
      \caption{Time series plots of daily losses for AFLAC INC (AFL) and the S\&P 500 index.}
  \label{fig:time series}
\end{figure}

\subsection{CoVaR estimation and forecasting in the dynamic setting}\label{sappl2}

The methodology outlined in Section~\ref{smethod}, developed under the premise of i.i.d. observations, is not directly suited to produce dynamic estimates and forecasts of CoVaR for financial time series, known to possess serial dependence and display volatility clustering. One way to address this issue is by combining a GARCH-type model for capturing the evolution of the conditional mean and variance of the underlying stochastic process with an EVT-based static treatment of the i.i.d. innovations; see, e.g. \cite{McNeilFrey2000}. This leads to a two-stage procedure in which first an ARMA-GARCH process is fitted to the returns (or losses) data, assuming a parametric model for innovations, followed by applying an EVT-based estimation procedure to the sample of realized residuals.

Let $\{X_t^i\}_{t\in \nbb}$ and $\{X_t^s\}_{t\in \nbb}$ denote time series of losses for an institution (or company) and a system proxy (market index) adapted to the filtrations $\FC^i=\{\FC_t^i\}_{t\in \nbb}$ and $\FC^s=\{\FC_t^s\}_{t\in \nbb}$, respectively. To produce dynamic forecasts, we next define conditional versions of risk measures at time $t$ given information in the series up to time $t-1$.
The (conditional) VaR at confidence level $p_1\in (0,1)$ for $X_t^i$ given information on the institution's losses up to time $t-1$, denoted $\VaR^i_{t}(p_1)$,  is defined as the $(1-p_1)$-quantile of the distribution of~$X_t^i$ conditional on $\FC_{t-1}^i$:
$$\pbb\bigl(X_t^i\geq \VaR^i_{t}(p_1)\mid \FC_{t-1}^i \bigr) = 1 - p_1,$$
and $\CoVaR_{t}^{s|i}(p_1,p_2)$ is defined as the $(1-p_2)$-quantile of the conditional loss distribution given information on losses up to time $t-1$ for both the institution and the system proxy:
\begin{equation}
\label{dynamic_CoVaR}
    \pbb\bigl(X_t^s \geq \CoVaR_{t}^{s|i}(p_1,p_2) | X_t^i\geq \VaR^i_{t}(p_1);\ \FC_{t-1}^i,\   \FC_{t-1}^s\bigr) = 1 - p_2.
\end{equation}
The details of the two-stage procedure for estimating $\CoVaR_{t}^{s|i}(p_1,p_2)$ are provided in the Supplementary Appendix S4.

\subsection{In-sample analysis}
\label{insample}
In this section, we perform stationary CoVaR estimation at risk level $\pb = (0.02,0.05)$ on the basis of realized residuals from the AR(1)-GARCH(1,1) filter. Performance of the CoVaR estimator in~\eqref{covar_hat} under several parametric models for the tail dependence function is assessed via the unconditional coverage test with performance comparisons made using average quantile scores; see \cite{Banulescu_etal2020} and \cite{FisslerHoga2021} for details on backtesting of CoVaR.

Before applying the proposed methodology to the data, we first check appropriateness of several underlying assumptions, including positive quadrant dependence and tail dependence. Based on the distance-based tests given in \cite{Tang_etal2019}, we find that for the 14 institutions considered in Section~\ref{data} the null hypothesis of independence against the alternative of positive quadrant dependence is rejected with p-values below $0.001$. In order to validate the tail dependence assumption,  we estimate the upper tail dependence coefficient, $R(1,1)$, via the method proposed in \cite{Lee_etal2018}, which uses extrapolation of the tail-weighted measures $\zeta_{\alpha}$ over a sequence of $\alpha$ values. Standard errors for these estimates are obtained using a 5000-fold bootstrapping scheme. The estimated values and corresponding standard errors are given in Table~\ref{lambda_est}. For all firms, the upper tail dependence coefficient estimates are significantly greater than zero, thus validating the assumption of tail dependence between an institution and the market index.

\begin{table}[H]
\footnotesize
\centering
\caption{Estimates of the upper tail dependence coefficient $R(1,1)$ with the corresponding standard errors~(SE).}
\vspace{24pt}
\begin{tabular}{cccccccccccc}
\hline
Index  & $\hat{R}(1,1)$ & SE & Index  & $\hat{R}(1,1)$ & SE & Index  & $\hat{R}(1,1)$ & SE & Index  & $\hat{R}(1,1)$ & SE \\ \hline
AFL & 0.342  & 0.024 & AIG & 0.362 & 0.023 & ALL & 0.296 & 0.023 & BAC & 0.430 & 0.023 \\
HUM & 0.162 & 0.024 & JPM & 0.447 & 0.023 & LNC & 0.440 & 0.023 & MBI & 0.248 & 0.023 \\
PGR & 0.278 & 0.024 & SLM  & 0.259 & 0.024 & TRV & 0.320 & 0.023 & UNM & 0.377 & 0.024\\
WFC & 0.389 & 0.023 & WM & 0.290 & 0.022 \\ \hline
\end{tabular}
\label{lambda_est}
\end{table}

Estimation of VaR and CoVaR requires choosing suitable values for sample fractions $k_1$ and $k_2$. While it is common to take $k_2 = k_1$ and select a value with the two-step subsample bootstrap algorithm such as in \citet{Danielsson_etal2001}, we have found that this procedure leads to very low values of $k_1$ and $k_2$ for the considered datasets. As a result, in the present data analysis, we allow the two values to be different. We first select a value of $k_1$ for the tail index using the Hill plot. Then we perform a sensitivity analysis of VaR estimates to values of $k_2$ and select $k_2$ from a stable region. 

Note that the CoVaR estimator in~\eqref{covar_hat} does not involve explicit computation of the VaR estimates for each institution in the conditioning event. However, for the purpose of backtesting, these estimates are needed, in particular, in order to carry out the unconditional coverage test. The VaR estimates for the institutions are obtained using the extreme quantile estimator in~\eqref{qVaRhat}. The selected $k_1$ and resulting estimates of the tail index $\g$ are reported in Table~\ref{gamma_est}. Note that the estimate for the S\&P 500 index (GSPC) is lower than for the institutions, suggesting a lighter tail likely due to the effect of diversification.  

\begin{table}[H]
\centering
\footnotesize
\caption{Estimates of the tail index $\g$.}
\vspace{12pt}
\begin{tabular}{cccccccccccc}
\hline
Index  & $k_1$ & $\hat{\g}$ & Index   & $k_1$ & $\hat{\g}$ & Index   & $k_1$ & $\hat{\g}$ & Index & $k_1$ & $\hat{\g}$ \\ \hline
AFL & 250  & 0.353 & AIG & 250 & 0.349 & ALL & 300 & 0.336 & BAC & 250 & 0.308 \\
HUM & 250 & 0.356 & JPM & 250 & 0.307 & LNC & 270 & 0.314 & MBI & 300 & 0.362 \\
PGR & 300 & 0.342 & SLM  & 250 & 0.330 & TRV & 250 & 0.345 & UNM & 250 & 0.369\\
WFC & 250 & 0.276 & WM & 250 & 0.359 & GSPC & 200 & 0.257 \\ \hline
\end{tabular}
\label{gamma_est}
\end{table}

Using these estimated values, the plots of $k_2$ versus corresponding VaR estimates for all institutions and the index guide selection of a value for $k_2$. Figure~\ref{fig:VaRplots} show the plots for AFL and GSPC; plots for other institutions are similar to that of AFL. We observe that the curve for AFL seems to be stable for values around 220 to 280, while the curve for GSPC is stable from 150 to 210. Thus we select $k_2 = 250$ for all institutions and $k_2 = 200$ for GSPC.

\begin{figure}[ht]
  \centering
   \subfigure[AFL]{
    \includegraphics[width=0.35\linewidth]{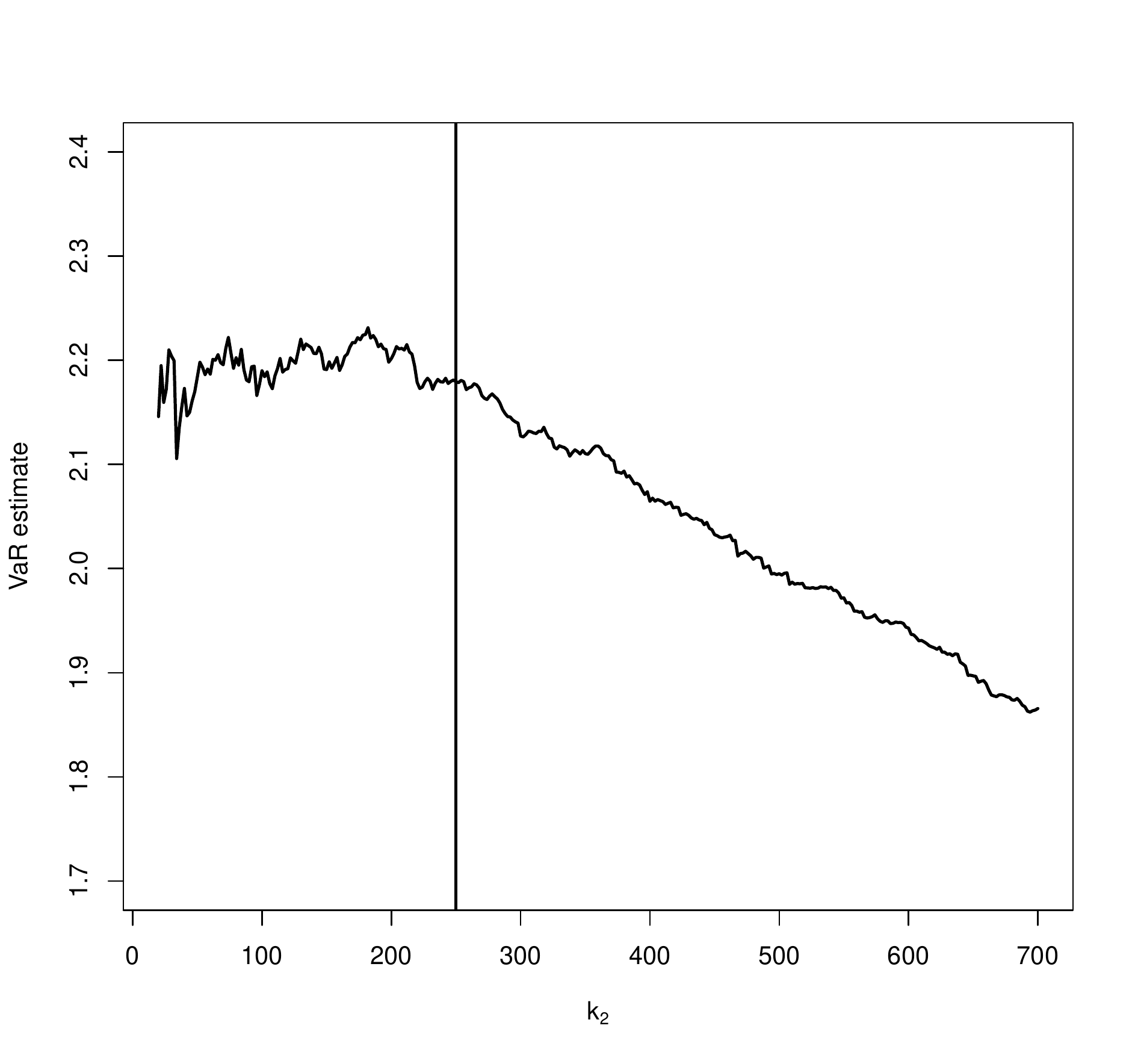}}
   \subfigure[GSPC]{
    \includegraphics[width=0.35\linewidth]{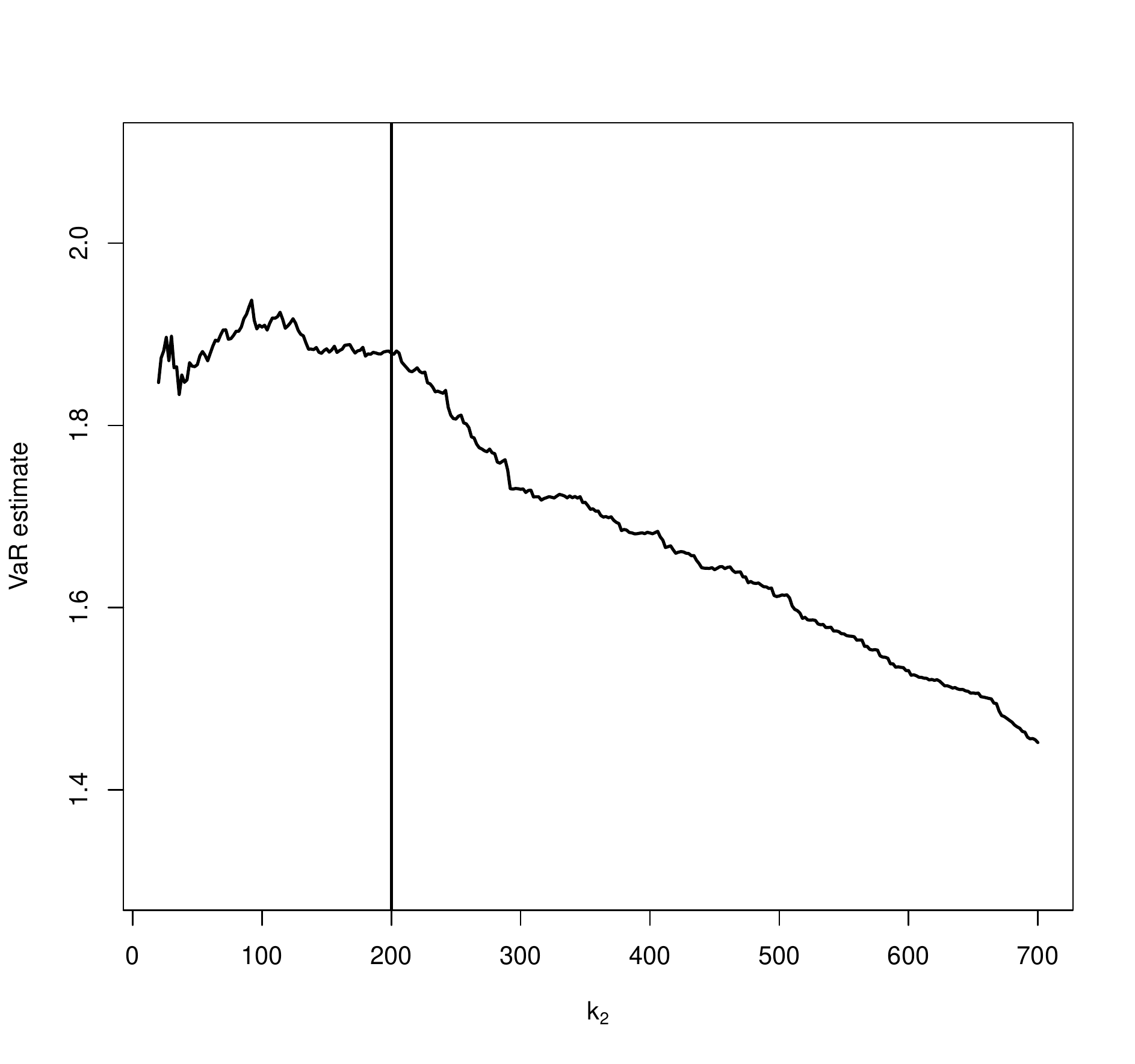}}
\caption{Estimates of VaR as a function of $k_2$ for realized residuals. The risk level for AFL is 0.02 and for GSPC it is 0.05. The vertical lines indicate the selected value of $k_2$. }
  \label{fig:VaRplots}
\end{figure}

Results of the unconditional coverage tests are summarized in Table~\ref{inno_uncon}\footnote{Tail dependence models include logistic (Log),  H\"{u}sler-Reiss (HR), bilogistic (Bilog), asymmetric logistic (Alog) and that of the bivariate t distribution (t); see the supplementary appendix for model specifications. $E_n/e_n$ is the observed/nominal number of exceedances of the VaR estimate, and $E_n^b/e_n^b$ is the observed/nominal number of joint exceedances of VaR and CoVaR estimates. ``FP" corresponds to the fully parametric method of \cite{Girardi2013}; ``EVT-NZ" is for the EVT-based method in \cite{NoldeZhang2018}. }. For all institutions, the VaR estimates pass the unconditional coverage test at 5\% significance level. The CoVaR estimates under the logistic, HR and bilogistic models for the tail dependence function seem to be overestimated, as the values of $e_n^b$ are always greater than those of $E_n^b$, especially under the HR model, where the unconditional coverage test is rejected at 5\% significance level for 5 institutions. On the other hand, the proposed estimator with the symmetric logistic and bivariate t tail dependence models, the fully parametric method as well as the EVT-NZ method provide much better calibrated estimates of CoVaR, passing all of the unconditional coverage tests.

To make further comparisons across the considered methods, we then summarize the average quantile scores for each CoVaR estimator in Table~\ref{ave_score} (see the top panel), using the classical 1-homogeneous scoring function for $(1-p_2)$-quantile: $S(r,x) = \bigl(p_2 - \ind\{x>r\}\bigr)r + \ind\{x>r\}x$ with $r$ denoting the estimate or forecast and $x$ the observation. When comparing within the proposed methodology, the asymmetric logistic and t models for the tail dependence function lead to a superior performance relative to the other three models for institutions other than TRV. And the HR model shows the worst performance for all the 14 institutions. These observations are consistent with the results of calibration reported in Tables~\ref{inno_uncon}. Furthermore, when compared with the fully parametric method in \cite{Girardi2013}, which assumes the bivariate skew-t distribution, and the EVT-based method in \cite{NoldeZhang2018}, for 12 out of 14 companies, the proposed methodology leads to a better performance in terms of accuracy of CoVaR estimates in the in-sample analysis. 

We note that, while conceptually CoVaR can be backtested in the same way as VaR, conditioning on an institution's losses being above its VaR estimate or forecast creates a practical difficulty to obtaining conclusive results when performing comparative backtesting due to substantial reduction in the size of the testing data set. Figure~\ref{figTLM}(a) shows traffic light matrix (see, e.g., \cite{NoldeZiegel2017}) for comparative backtests for institution MBI at 10\% test confidence level. For this institution, the proposed method with the tail dependence functions based on the bivariate t distribution and asymmetric logistic distribution has a significantly better performance than any of the other considered methods. For many of the remaining institutions, traffic light matrices tend to contain many yellow cells making comparisons statistically inconclusive but indicating that the proposed method is not worse than the other competing approaches.

If one is interested in a method that performs best across all institutions, one can combine \emph{normalized} scores.  Pooling information in such way leads to more conclusive results. Figure~\ref{figTLM}(b) shows traffic light matrices for comparative backtests based on the normalized average scores combined across all institutions. It confirms that the proposed method under the bivariate t model for the tail dependence function is significantly superior to all of the other methods in the in-sample analysis.

\subsection{Dynamic CoVaR forecasting}
\label{dynamic}
In this section, we perform a dynamic analysis to assess accuracy of out-of-sample forecasts of CoVaR at risk level $\pb = (0.02,0.05)$ for the times series described in Section~\ref{data}. We use a rolling window of 3000 data points to estimate model parameters and produce one-day ahead CoVaR forecasts according to the two-stage procedure detailed in the Supplementary Appendix S4. However, to reduce computational time, CoVaR and VaR estimates based on the samples of realized innovations are updated only every 50 observations. The resulting average quantile scores under different tail dependence models for the proposed estimator and two other competing approaches are presented in Table~\ref{ave_score} (see the bottom panel).

Some of the conclusions here are similar to those for the in-sample analysis. In particular, when applying the proposed method, the tail dependence functions from the asymmetric logistic and t distribution are preferred. They together yield superior performance relative to the other tail dependence functions for all but one institution (HUM). Comparing the proposed method with the fully parametric approach of \cite{Girardi2013} and an EVT-based estimator in \cite{NoldeZhang2018}, the new estimator produces the lowest average score for 10 out of 14 institutions. This slightly worse performance of the new estimator in the dynamic setting can be explained by reduction in the size of the estimation window, which affects accuracy of the M-estimator of the tail dependence function parameters. Compared to the results for the in-sample analysis in the top panel, the performance of the bivariate t model for tail dependence decreases dramatically for institutions HUM and WM, which can be attributed to computational complexity of the M-estimator due to the lack of a closed form expression for the tail dependence function. However, across all considered institutions, 
the proposed method with the tail dependence function based on the asymmetric logistic distribution provides a superior forecasting performance as indicated by the traffic light matrix in Figure~\ref{figTLM}(c).

\begin{figure}[ht] 
\centering
\subfigure[MBI]{\includegraphics[width=0.32\linewidth]{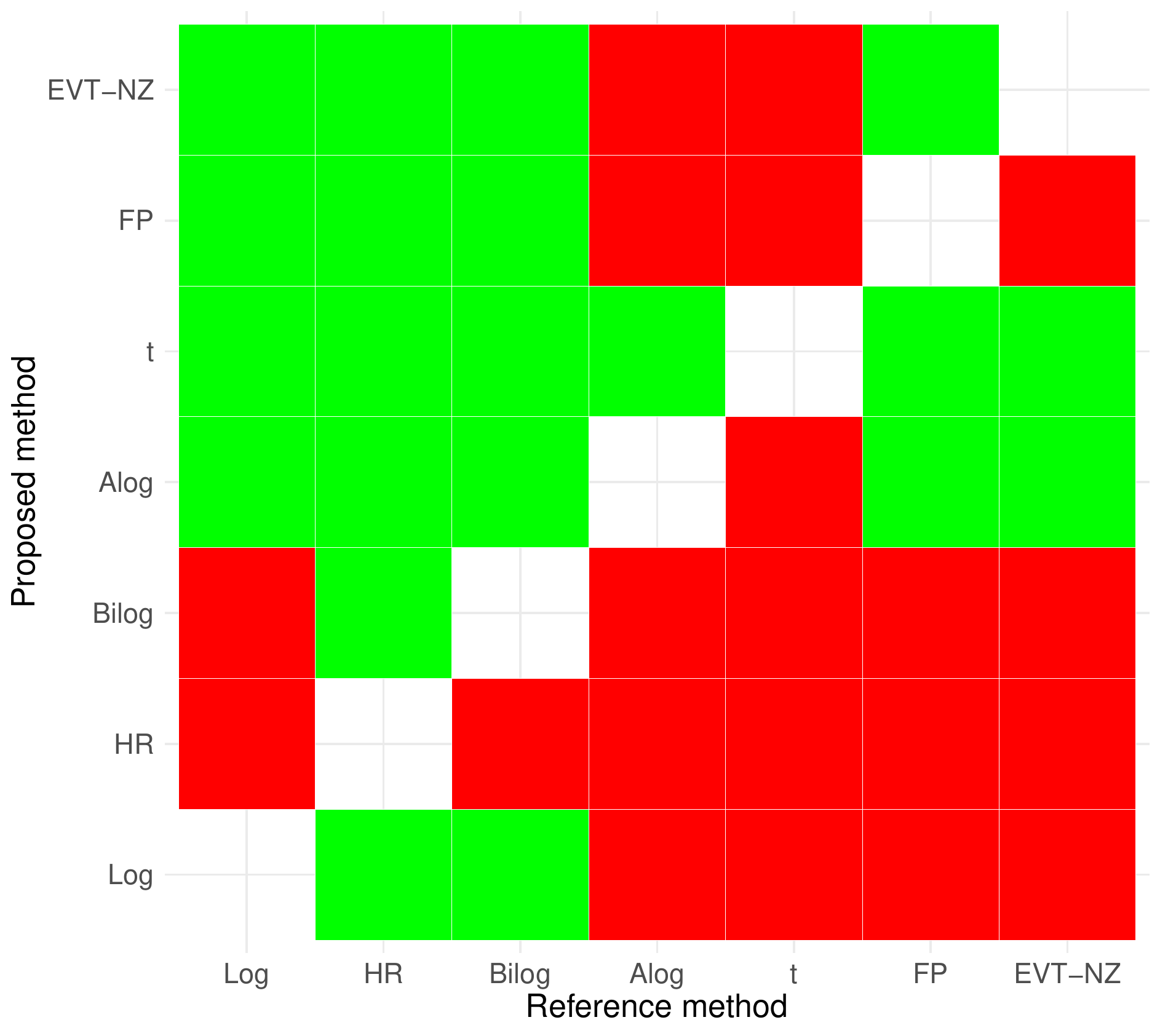}}
\subfigure[In-sample]{\includegraphics[width=0.32\linewidth]{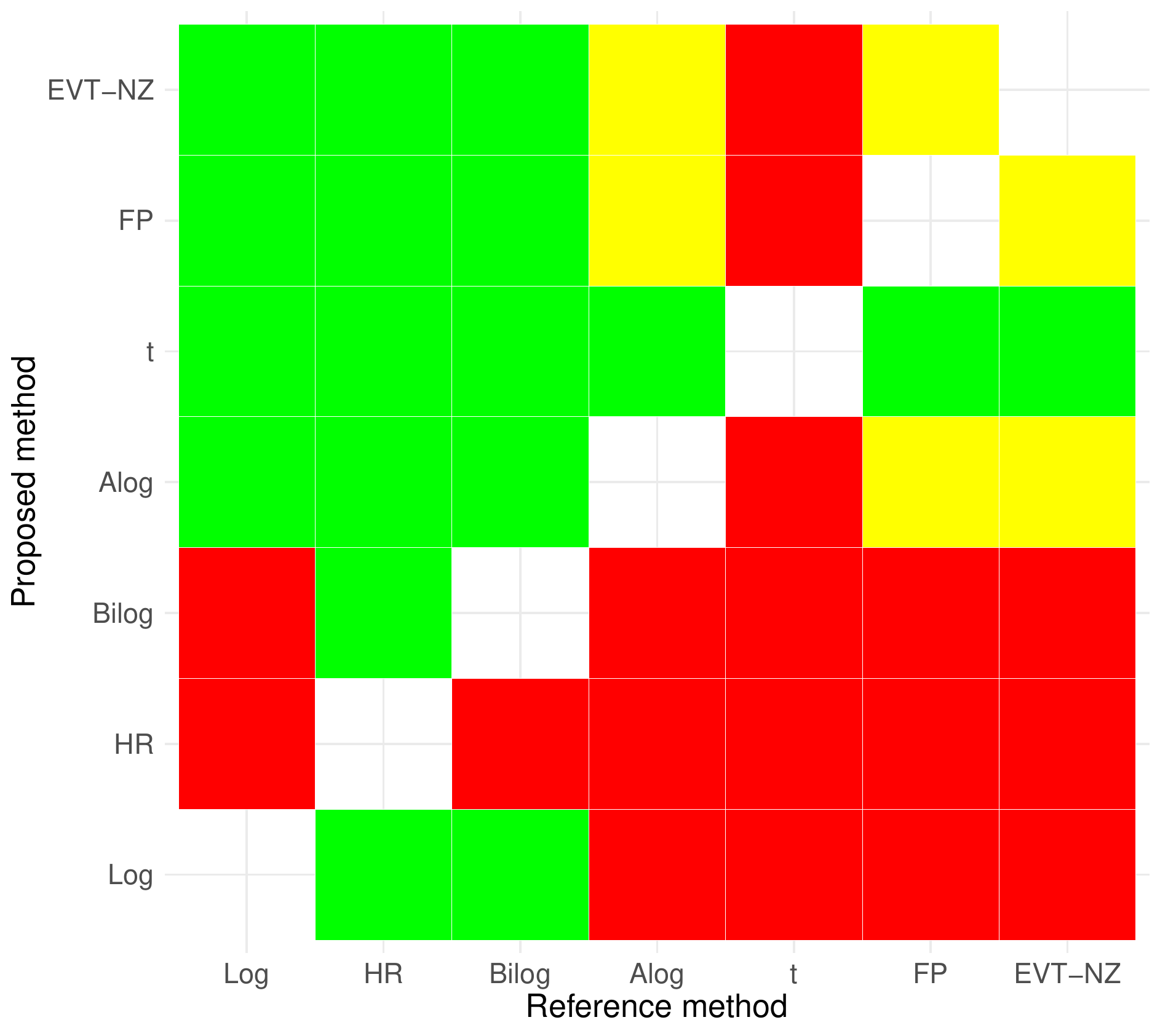}}
\subfigure[Out-of-sample]{\includegraphics[width=0.32\linewidth]{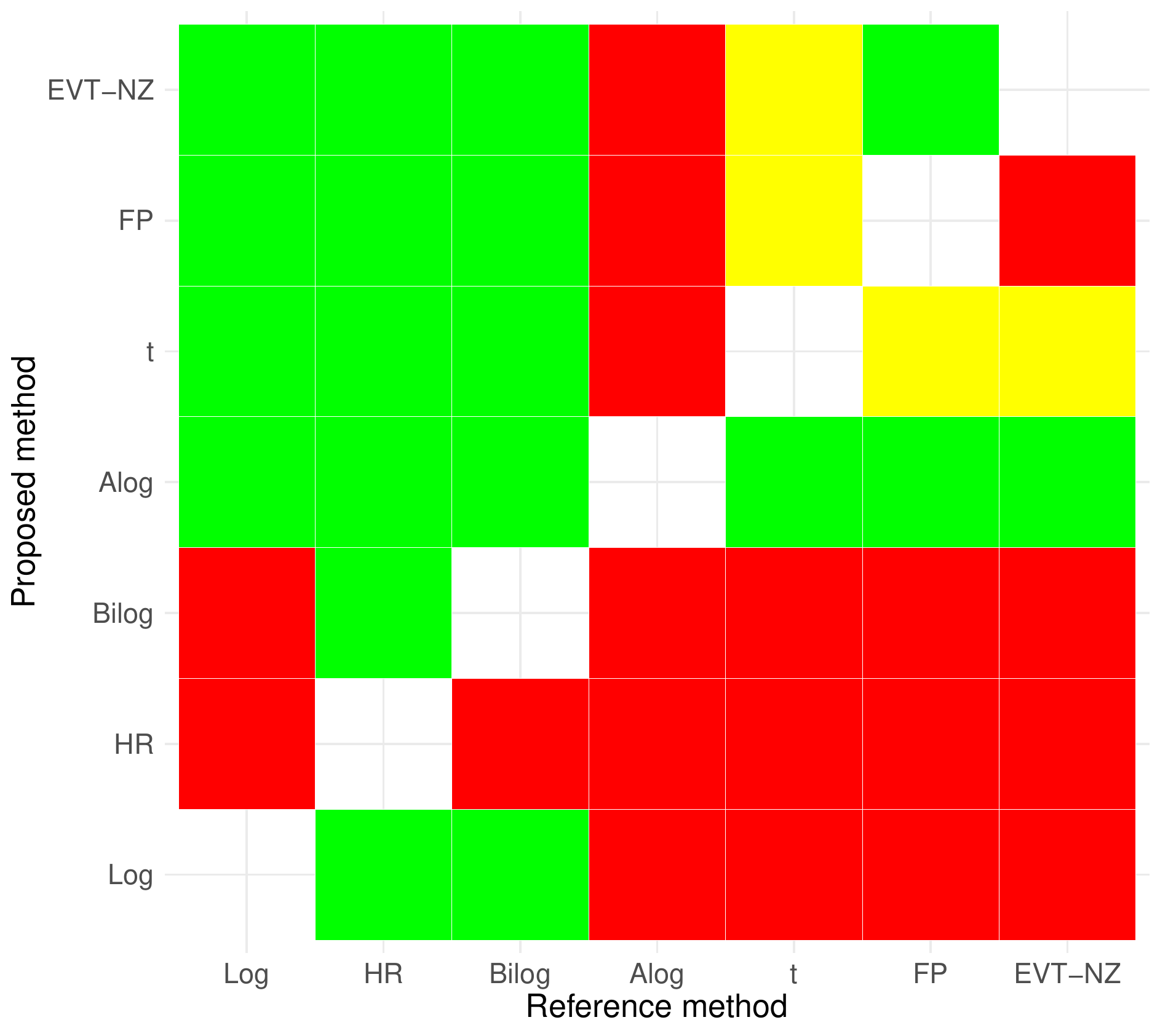}}
\caption{Traffic light matrices for comparative backtests of CoVaR estimates/forecasts at risk level $\pb = (0.02,0.05)$ and confidence test level of 10\%. The left panel is the result of in-sample estimation for institution MBI; the middle and right panels are the results of the in-sample analysis and dynamic out-of-sample forecasts using the combined normalized scores across all considered institutions. Red/green indicates that the reference method is significantly better/worse in terms of the forecasting accuracy than the proposed method. Yellow indicates that the score differences are not statistically significant. }
\label{figTLM}
\end{figure}

\section{Conclusion}\label{sconc}
The paper develops an EVT-based semi-parametric method for estimating the CoVaR, a predominant systemic risk measure. The methodology rests on the existence of a non-degenerate tail dependence function: with modelling the tail dependence function parametrically, we address the data sparsity issue in the joint tail regions. The eventual estimator follows the nonparametric extrapolation techniques in univariate tail estimation. The proposed CoVaR estimator is shown to be consistent. Simulation studies illustrate good performance of the estimator and indicate that its bias and variance are dominated by that of the tail index estimator. Using time series data for 14 financial institutions, we find that the proposed method provides a highly competitive alternative to other existing approaches, while allowing for more flexible model assumptions.

We note that parametric modelling of the tail dependence function comes with the challenge of model selection as well as computational complexity. The latter is especially an issue when the dimension of the parameter vector of the selected model is large and the tail dependence function does not have a simple explicit form to carry through the M-estimation. Finding a more flexible yet computationally tractable way to model tail dependence may help to improve the current framework. Another limitation of the proposed methodology is that it can only be applied to situations in which the tail dependence function is non-degenerate, i.e., in the presence of tail dependence. While tail dependence is a reasonable and most relevant assumption for many financial time series, in some situations, especially in the context of environmental applications, extending the current methodology to also include the case of tail independence will be useful. 

\section*{Supplementary Materials}
In the supplementary appendix, Section S1 provides the proof of Theorem~\ref{main theorem}. Section S2 provides (upper) tail dependence functions for the five parametric models used in simulation studies and the application. Section S3 contains plots of $R(1,\eta)$ as a function of $\eta$ under several tail dependence models. Section S4 summarizes the two-stage procedure for producing dynamic CoVaR forecasts.

\if0\blind
{
\section*{Acknowledgements} N. Nolde and M. Zhou acknowledge financial support of the UBC-Scotiabank Risk Analytics Initiative and Natural Sciences and Engineering Research Council of Canada. 
} \fi

\newpage
\begin{landscape}
\begin{table}[H]
\tiny
\centering
\caption{Unconditional coverage tests for VaR of institutions and CoVaR based on realized residuals at level $\pb = (0.02,0.05)$.}
\vspace{12pt}
\begin{tabular}{cc|cccccccccccccc}
\hline
                        &          & AFL    & AIG    & ALL    & BAC    & HUM             & JPM    & LNC    & MBI             & PGR    & SLM             & TRV    & UNM             & WFC    & WM              \\ \hline
\multirow{4}{*}{VaR}    & estimate & 2.1794 & 2.2086 & 2.1058 & 2.1984 & 2.0821          & 2.1272 & 2.2398 & 2.1250          & 2.1367 & 2.1036          & 2.1784 & 2.2624          & 2.1453 & 2.1878          \\
                        & $E_n$    & 114    & 115    & 110    & 112    & 113             & 117    & 112    & 107             & 112    & 114             & 111    & 116             & 116    & 118             \\ 
                        & $e_n$    & 110.68 & 110.68 & 110.68 & 110.68 & 110.68          & 110.68 & 110.68 & 110.68          & 110.68 & 110.68          & 110.68 & 110.68          & 110.68 & 110.68          \\
                        & p-value  & 0.7511 & 0.6802 & 0.9479 & 0.8993 & 0.8243          & 0.5476 & 0.8993 & 0.7224          & 0.8993 & 0.7511          & 0.9755 & 0.6122          & 0.6122 & 0.4868          \\ \hline
\multirow{4}{*}{Log}    & estimate & 4.8831 & 4.9201 & 4.7874 & 5.0023 & 4.3055          & 5.0284 & 5.0233 & 4.6398          & 4.7298 & 4.6643          & 4.8385 & 4.9313          & 4.9520 & 4.7716          \\
                        & $E_n^b$  & 3      & 3      & 4      & 3      & 4               & 3      & 3      & 1               & 5      & 2               & 5      & 1               & 3      & 2               \\
                        & $e_n^b$  & 5.7    & 5.75   & 5.5    & 5.6    & 5.65            & 5.85   & 5.6    & 5.35            & 5.6    & 5.7             & 5.55   & 5.8             & 5.8    & 5.9             \\
                        & p-value  & 0.2037 & 0.1969 & 0.4912 & 0.2179 & 0.4533          & 0.1839 & 0.2179 & \textbf{0.0187} & 0.7912 & 0.0678          & 0.8077 & \textbf{0.0121} & 0.1903 & 0.0575          \\ \hline
\multirow{4}{*}{HR}     & estimate & 5.0669 & 5.0809 & 5.046  & 5.1109 & 4.7667          & 5.1203 & 5.1189 & 4.9573          & 4.9857 & 4.9450          & 5.0537 & 5.0878          & 5.0961 & 5.0247          \\
                        & $E_n^b$  & 3      & 3      & 3      & 3      & 1               & 3      & 3      & 1               & 4      & 1               & 4      & 1               & 3      & 1               \\
                        & $e_n^b$  & 5.7    & 5.75   & 5.5    & 5.6    & 5.65            & 5.85   & 5.6    & 5.35            & 5.6    & 5.7             & 5.55   & 5.8             & 5.8    & 5.9             \\
                        & p-value  & 0.2037 & 0.1969 & 0.2330 & 0.2179 & \textbf{0.0140} & 0.1839 & 0.2179 & \textbf{0.0187} & 0.4657 & \textbf{0.0134} & 0.4783 & \textbf{0.0121} & 0.1903 & \textbf{0.0110} \\ \hline
\multirow{4}{*}{Bilog}  & estimate & 4.9124 & 4.9310 & 4.7970 & 5.0073 & 4.3330          & 5.0311 & 5.0197 & 4.6578          & 4.7344 & 4.6786          & 4.8472 & 5.1222          & 4.9577 & 4.7895          \\
                        & $E_n^b$  & 3      & 3      & 4      & 3      & 4               & 3      & 3      & 1               & 5      & 2               & 5      & 1               & 3      & 2               \\
                        & $e_n^b$  & 5.7    & 5.75   & 5.5    & 5.6    & 5.65            & 5.85   & 5.6    & 5.35            & 5.6    & 5.7             & 5.55   & 5.8             & 5.8    & 5.9             \\
                        & p-value  & 0.2037 & 0.1969 & 0.4912 & 0.2179 & 0.4533          & 0.1839 & 0.2179 & \textbf{0.0187} & 0.7912 & 0.0678          & 0.8077 & \textbf{0.0121} & 0.1903 & 0.0575          \\ \hline
\multirow{4}{*}{Alog}   & estimate & 4.5123 & 4.5334 & 4.4339 & 4.6032 & 3.9927          & 4.6147 & 4.6163 & 4.2892          & 4.3817 & 4.2994          & 4.4702 & 4.5337          & 4.5585 & 4.4075          \\
                        & $E_n^b$  & 5      & 4      & 5      & 6      & 4               & 6      & 6      & 2               & 5      & 4               & 6      & 4               & 6      & 2               \\
                        & $e_n^b$  & 5.7    & 5.75   & 5.5    & 5.6    & 5.65            & 5.85   & 5.6    & 5.35            & 5.6    & 5.7             & 5.55   & 5.8             & 5.8    & 5.9             \\
                        & p-value  & 0.7589 & 0.4293 & 0.8243 & 0.8638 & 0.4533          & 0.9495 & 0.8638 & 0.0901          & 0.7912 & 0.4412          & 0.8465 & 0.4177          & 0.9325 & 0.0575          \\ \hline
\multirow{4}{*}{t}      & estimate & 4.4518 & 4.4776 & 4.2584 & 4.7650 & 3.6674          & 4.8043 & 4.7302 & 3.9946          & 4.3163 & 4.1515          & 4.3661 & 4.5109          & 4.716  & 4.1211          \\
                        & $E_n^b$  & 5      & 4      & 7      & 5      & 6               & 4      & 5      & 3               & 6      & 4               & 8      & 4               & 5      & 4               \\
                        & $e_n^b$  & 5.7    & 5.75   & 5.5    & 5.6    & 5.65            & 5.85   & 5.6    & 5.35            & 5.6    & 5.7             & 5.55   & 5.8             & 5.8    & 5.9             \\
                        & p-value  & 0.7589 & 0.4293 & 0.5282 & 0.7912 & 0.8811          & 0.4064 & 0.7912 & 0.2573          & 0.8638 & 0.4412          & 0.3155 & 0.4177          & 0.7273 & 0.3952          \\ \hline
\multirow{4}{*}{FP}     & estimate & 4.7033 & 4.5165 & 4.4981 & 4.2482 & 4.0225          & 4.3067 & 4.3846 & 4.3765          & 4.2854 & 4.156           & 4.2367 & 4.7418          & 4.1679 & 4.2795          \\
                        & $E_n^b$  & 4      & 4      & 5      & 8      & 4               & 9      & 6      & 2               & 6      & 4               & 8      & 3               & 7      & 3               \\
                        & $e_n^b$  & 5.7    & 5.75   & 5.5    & 5.6    & 5.65            & 5.85   & 5.6    & 5.35            & 5.6    & 5.7             & 5.55   & 5.8             & 5.8    & 5.9             \\
                        & p-value  & 0.4412 & 0.4293 & 0.8243 & 0.3268 & 0.4533          & 0.2140 & 0.8638 & 0.0901          & 0.8638 & 0.4412          & 0.3155 & 0.1903          & 0.6200 & 0.1777          \\ \hline
\multirow{4}{*}{EVT-NZ} & estimate & 4.2783 & 4.2868 & 4.1591 & 4.2683 & 3.6413          & 4.046  & 4.2945 & 4.3109          & 4.2885 & 4.2570          & 4.0336 & 4.6699          & 4.1215 & 4.3309          \\
                        & $E_n^b$  & 5      & 5      & 7      & 8      & 6               & 10     & 9      & 2               & 6      & 4               & 8      & 3               & 7      & 3               \\
                        & $e_n^b$  & 5.7    & 5.75   & 5.5    & 5.6    & 5.65            & 5.85   & 5.6    & 5.35            & 5.6    & 5.7             & 5.55   & 5.8             & 5.8    & 5.9             \\
                        & p-value  & 0.7589 & 0.7430 & 0.5282 & 0.3268 & 0.8811          & 0.1082 & 0.1738 & 0.0901          & 0.8638 & 0.4412          & 0.3155 & 0.1903          & 0.6200 & 0.1777        \\ \hline 
\end{tabular}
\label{inno_uncon}
\end{table}
\end{landscape}

\begin{table}[H]
\centering
\scriptsize
\caption{The average quantile scores of CoVaR estimates at level $\pb = (0.02,0.05)$. The top panel gives the results of CoVaR estimates based on the whole dataset; the bottom panel gives the results of dynamic CoVaR forecasts. The smallest score for each company is highlighted in boldface.}
\vspace{24pt}
\begin{tabular}{ccccccccc}
\hline                                                                           &     & Log    & HR     & Bilog  & Alog   & t      & FP     & EVT-NZ \\
 \hline 
 \multicolumn{1}{c|}{\multirow{14}{*}{\begin{tabular}[c]{@{}c@{}}In-sample\\ Analysis\end{tabular}}} & AFL & 0.2826 & 0.2870 & 0.2833 & 0.2784 & 0.2780 & 0.2799 & {\bf 0.2770}  \\
\multicolumn{1}{c|}{} & AIG & 0.2832 & 0.2870 & 0.2834 & 0.2751 & 0.2742 & 0.2748 & {\bf 0.2721}  \\
\multicolumn{1}{c|}{} & ALL & 0.2702 & 0.2752 & 0.2703 & {\bf 0.2675} & 0.2689 & 0.2678 & 0.2702  \\
\multicolumn{1}{c|}{}& BAC & 0.2861 & 0.2886 & 0.2862 & {\bf 0.2813} & 0.2816 & 0.2848 & 0.2844  \\
\multicolumn{1}{c|}{}& HUM & 0.2248 & 0.2393 & 0.2252 & 0.2203 & {\bf 0.2191} & 0.2207 & 0.2192  \\
\multicolumn{1}{c|}{}& JPM & 0.2852 & 0.2874 & 0.2852 & {\bf 0.2791} & 0.2804 & 0.2812 & 0.2887  \\
\multicolumn{1}{c|}{}& LNC & 0.2866 & 0.2888 & 0.2865 & {\bf 0.2812} & 0.2814 & 0.2821 & 0.2846  \\
\multicolumn{1}{c|}{}& MBI & 0.2378 & 0.2507 & 0.2386 & 0.2244 & {\bf 0.2162} & 0.2272 & 0.2251  \\
\multicolumn{1}{c|}{}& PGR & 0.2911 & 0.2934 & 0.2911 & {\bf 0.2892} & 0.2894 & 0.2895 & 0.2895  \\
\multicolumn{1}{c|}{}& SLM & 0.2394 & 0.2501 & 0.2399 & 0.2289 & {\bf 0.2266} & 0.2267 & 0.2282  \\
\multicolumn{1}{c|}{}& TRV & {\bf 0.2921} & 0.2948 & 0.2922 &  0.2930 & 0.2936 & 0.2965 & 0.3010  \\
\multicolumn{1}{c|}{}& UNM & 0.2494 & 0.2559 & 0.2573 & 0.2391 & {\bf 0.2387} & 0.2430 & 0.2413  \\
\multicolumn{1}{c|}{}& WFC & 0.2776 & 0.2811 & 0.2778 & {\bf 0.2736} & 0.2738 & 0.2761 & 0.2766  \\
\multicolumn{1}{c|}{}& WM  & 0.2486 & 0.2582 & 0.2492 & 0.2366 & {\bf 0.2304} & 0.2332 & 0.2345 \\ \hline \hline

\multicolumn{1}{c|}{\multirow{14}{*}{\begin{tabular}[c]{@{}c@{}}Dynamic\\ Forecasting\end{tabular}}}  & AFL & 0.2671 & 0.2735 & 0.2714 & {\bf 0.2582} & 0.2583 & 0.2655 & 0.2628  \\
\multicolumn{1}{c|}{}& AIG & 0.2653 & 0.2705 & 0.2659 & {\bf 0.2577} & 0.2581 & 0.2643 & 0.2624  \\
\multicolumn{1}{c|}{}& ALL & 0.3010 & 0.3083 & 0.3023 & 0.2943 & 0.2982 & {\bf 0.2942} & 0.2992  \\
\multicolumn{1}{c|}{}& BAC & 0.2411 & 0.2449 & 0.2415 & {\bf 0.2295} & 0.2317 & 0.2536 & 0.2329  \\
\multicolumn{1}{c|}{}& HUM & 0.2415 & 0.2424 & {\bf 0.2404} & 0.2480 & 0.2862 & 0.2457 & 0.2619  \\
\multicolumn{1}{c|}{}& JPM & 0.2452 & 0.2473 & 0.2453 & {\bf 0.2360} & 0.2392 & 0.2535 & 0.2445  \\
\multicolumn{1}{c|}{}& LNC & 0.2270 & 0.2300 & 0.2272 & 0.2135 & 0.2150 & 0.2205 & {\bf 0.2117}  \\
\multicolumn{1}{c|}{}& MBI & 0.3052 & 0.3263 & 0.3069 & 0.2864 & {\bf 0.2764} & 0.2927 & 0.2799  \\
\multicolumn{1}{c|}{}& PGR & 0.2741 & 0.2800 & 0.2743 & {\bf 0.2741} & 0.2759 & 0.2742 & 0.2765  \\
\multicolumn{1}{c|}{}& SLM & 0.2352 & 0.2466 & 0.2358 & {\bf 0.2245} & 0.2295 & 0.2274 & 0.2281  \\
\multicolumn{1}{c|}{}& TRV & 0.2708 & 0.2788 & 0.2710 & {\bf 0.2683} & 0.2708 & 0.2690 & 0.2726  \\
\multicolumn{1}{c|}{}& UNM & 0.2359 & 0.2405 & 0.2365 & 0.2219 & {\bf 0.2209} & 0.2288 & 0.2218  \\
\multicolumn{1}{c|}{}& WFC & 0.2355 & 0.2390 & 0.2355 & 0.2241 & 0.2270 & 0.2242 & {\bf 0.2197}  \\
\multicolumn{1}{c|}{}& WM  & 0.2554 & 0.2707 & 0.2580 & 0.2485 & 0.2521 & 0.2495 & {\bf 0.2479} \\ \hline
\end{tabular}
\label{ave_score}
\end{table}

\bibliographystyle{plainnat} 

\begin{thebibliography}{29}
\providecommand{\natexlab}[1]{#1}
\providecommand{\url}[1]{\texttt{#1}}
\expandafter\ifx\csname urlstyle\endcsname\relax
  \providecommand{\doi}[1]{doi: #1}\else
  \providecommand{\doi}{doi: \begingroup \urlstyle{rm}\Url}\fi

\bibitem[Acharya et~al.(2017)Achaya, Pedersen, Philippon, and Richardson]{Acharya_etal2017}
V.V. Acharya, L.H. Pedersen, T.~Philippon, M.~Richardson.
\newblock {Measuring systemic risk}.
\newblock \emph{The Review of Financial Studies}, 30:2--47, 2017.

\bibitem[Adrian and Brunnermeier(2011)]{AdrianBrunnermeier2011}
T.~Adrian and M.K. Brunnermeier.
\newblock {CoVaR}.
\newblock Technical report, National Bureau of Economic Research, 2011.

\bibitem[Banulescu-Radu et~al.(2020)Banulescu-Radu, Hurlin, Leymarie, and
  Scaillet]{Banulescu_etal2020}
D.~Banulescu-Radu, C.~Hurlin, J.~Leymarie, and O.~Scaillet.
\newblock Backtesting marginal expected shortfall and related systemic risk
  measures.
\newblock \emph{Management Science}, 67:\penalty0 5730--5754, 2020.

\bibitem[Coles and Tawn(1991)]{ColesTawn1991}
S.G. Coles and J.A. Tawn.
\newblock Modelling extreme multivariate events.
\newblock \emph{Journal of the Royal Statistical Society: Series B},
  53:\penalty0 377--392, 1991.

\bibitem[Danielsson et~al.(2001)Danielsson, de~Haan, Peng, and
  de~Vries]{Danielsson_etal2001}
J.~Danielsson, L.~de~Haan, L.~Peng, and C.G. de~Vries.
\newblock Using a bootstrap method to choose the sample fraction in tail index
  estimation.
\newblock \emph{Journal of Multivariate Analysis}, 76:\penalty0 226--248, 2001.

\bibitem[de~Haan and Ferreira(2006)]{dHF2006}
L.~de~Haan and A.~Ferreira.
\newblock \emph{Extreme Value Theory: An Introduction}.
\newblock Springer Science \& Business Media, 2006.

\bibitem[Einmahl et~al.(2008)Einmahl, Krajina, and Segers]{Einmahl_etal2008}
J.H. Einmahl, A.~Krajina, and J.~Segers.
\newblock A method of moments estimator of tail dependence.
\newblock \emph{Bernoulli}, 14:\penalty0 1003--1026, 2008.

\bibitem[Einmahl et~al.(2012)Einmahl, Krajina, and Segers]{Einmahl_etal2012}
J.H. Einmahl, A.~Krajina, and J.~Segers.
\newblock An {M}-estimator for tail dependence in arbitrary dimensions.
\newblock \emph{The Annals of Statistics}, 40:\penalty0 1764--1793, 2012.

\bibitem[Fissler and Hoga(2021)]{FisslerHoga2021}
T.~Fissler and Y.~Hoga.
\newblock Backtesting systemic risk forecasts using multi-objective
  elicitability.
\newblock \emph{arXiv preprint arXiv:2104.10673}, 2021.

\bibitem[Girardi and Erg{\"u}n(2013)]{Girardi2013}
G.~Girardi and A.T. Erg{\"u}n.
\newblock Systemic risk measurement: Multivariate {GARCH} estimation of
  {CoVaR}.
\newblock \emph{Journal of Banking \& Finance}, 37:\penalty0 3169--3180, 2013.

\bibitem[Gnedenko(1943)]{Gnedenko1943}
B.V. Gnedenko.
\newblock Sur la distribution limit\'e du terme d'une s\'erie al\'eatoire.
\newblock \emph{Ann. Math.}, 44:\penalty0 423--453, 1943.

\bibitem[Hill(1975)]{Hill1975}
B.M. Hill.
\newblock A simple general approach to inference about the tail of a
  distribution.
\newblock \emph{The Annals of Statistics}, 3:1163--1174, 1975.

\bibitem[H{\"u}sler and Reiss(1989)]{HuslerReiss1989}
J.~H{\"u}sler and R.D. Reiss.
\newblock Maxima of normal random vectors: between independence and complete
  dependence.
\newblock \emph{Statistics \& Probability Letters}, 7:\penalty0 283--286, 1989.

\bibitem[Joe et~al.(1992)Joe, Smith, and Weissman]{Joe_etal1992}
H.~Joe, R.L. Smith, and I.~Weissman.
\newblock Bivariate threshold methods for extremes.
\newblock \emph{Journal of the Royal Statistical Society: Series B},
  54:\penalty0 171--183, 1992.

\bibitem[Joe(1997)]{Joe1997}
H. Joe.
\newblock \emph{Multivariate Models And Multivariate Dependence Concepts}.
\newblock Chapman and Hall/CRC, 1997.

\bibitem[Ledford and Tawn(1996)]{LedfordTawn1996}
A.W. Ledford and J.A. Tawn.
\newblock Statistics for near independence in multivariate extreme values.
\newblock \emph{Biometrika}, 83:\penalty0 169--187, 1996.

\bibitem[Lee et~al.(2018)Lee, Joe, and Krupskii]{Lee_etal2018}
D. Lee, H. Joe, and P. Krupskii.
\newblock Tail-weighted dependence measures with limit being the tail
  dependence coefficient.
\newblock \emph{Journal of Nonparametric Statistics}, 30\penalty0:\penalty0
  262--290, 2018.

\bibitem[Mainik and Schaanning(2014)]{MainikSchaanning2014}
G.~Mainik and E.~Schaanning.
\newblock On dependence consistency of {CoVaR} and some other systemic risk
  measures.
\newblock \emph{Statistics \& Risk Modeling}, 31:\penalty0 49--77, 2014.

\bibitem[McNeil and Frey(2000)]{McNeilFrey2000}
A.J. McNeil and R.~Frey.
\newblock Estimation of tail-related risk measures for heteroscedastic
  financial time series: an extreme value approach.
\newblock \emph{Journal of Empirical Finance}, 7:\penalty0 271--300, 2000.

\bibitem[McNeil et~al.(2005)McNeil, Frey, and Embrechts]{QRM}
A.J. McNeil, R.~Frey, and P.~Embrechts.
\newblock \emph{Quantitative Risk Management: Concepts, Techniques And Tools.}
\newblock Princeton University Press, Princeton, 2005.

\bibitem[Nolde and Zhang(2018)]{NoldeZhang2018}
N.~Nolde and J.~Zhang.
\newblock Conditional extremes in asymmetric financial markets.
\newblock \emph{Journal of Business \& Economic Statistics}, 38: 201--213, 2018.

\bibitem[Nolde and Ziegel(2017)]{NoldeZiegel2017}
N.~Nolde and J.~Ziegel.
\newblock Elicitability and backtesting: Perspectives for banking regulation.
\newblock \emph{The Annals of Applied Statistics}, 11:\penalty0 1833--1874,
  2017.

\bibitem[Resnick(1987)]{Resnick1987}
S.I. Resnick.
\newblock \emph{Extreme Values, Regular Variation and Point Processes}.
\newblock Springer, 1987.

\bibitem[Smith(1990)]{Smith1990}
R.L. Smith.
\newblock Extreme value theory.
\newblock \emph{Handbook of Applicable Mathematics}, 7:\penalty0 437--471,
  1990.

\bibitem[Smith(1994)]{Smith1994}
R.L. Smith.
\newblock Multivariate threshold methods.
\newblock In \emph{Extreme Value Theory and Applications}, pages 225--248.
  Springer, 1994.

\bibitem[Tang et~al.(2019)Tang, Wang, El~Barmi, and Tebbs]{Tang_etal2019}
C.F. Tang, D.~Wang, H.~El~Barmi, and J.M. Tebbs.
\newblock Testing for positive quadrant dependence.
\newblock \emph{The American Statistician}, 75:23--30, 2021.

\bibitem[Weissman(1978)]{Weissman1978}
I.~Weissman.
\newblock Estimation of parameters and large quantiles based on the $k$ largest
  observations.
\newblock \emph{Journal of the American Statistical Association}, 73:\penalty0
  812--815, 1978.

\end{thebibliography}

\begin{thebibliography}{29}
\providecommand{\natexlab}[1]{#1}
\providecommand{\url}[1]{\texttt{#1}}
\expandafter\ifx\csname urlstyle\endcsname\relax
  \providecommand{\doi}[1]{doi: #1}\else
  \providecommand{\doi}{doi: \begingroup \urlstyle{rm}\Url}\fi
  
\bibitem[Bollerslev(1986)]{Bollerslev1986}
T.~Bollerslev.
\newblock Generalized autoregressive conditional heteroskedasticity.
\newblock \emph{Journal of Econometrics}, 31:\penalty0 307--327, 1986.  
  
 \bibitem[de~Haan and Ferreira(2006)]{dHF2006_sup}
L.~de~Haan and A.~Ferreira.
\newblock \emph{Extreme Value Theory: An Introduction}.
\newblock Springer Science \& Business Media, 2006.

\bibitem[Demarta and McNeil(2005)]{DemartaMcNeil2005}
S. Demarta and A.~J. McNeil.
\newblock The t copula and related copulas.
\newblock \emph{International Statistical Review}, 73\penalty0:\penalty0
  111--129, 2005.
  
\bibitem[Fern{\'a}ndez and Steel(1998)]{FernandezSteel1998}
C.~Fern{\'a}ndez and M.~Steel.
\newblock On bayesian modeling of fat tails and skewness.
\newblock \emph{Journal of the American Statistical Association}, 93:\penalty0
  359--371, 1998.
  
\bibitem[Girardi and Erg{\"u}n(2013)]{Girardi2013_sup}
G.~Girardi and A.T. Erg{\"u}n.
\newblock Systemic risk measurement: Multivariate {GARCH} estimation of
  {CoVaR}.
\newblock \emph{Journal of Banking \& Finance}, 37:\penalty0 3169--3180, 2013.

\bibitem[Nolde and Zhang(2018)]{NoldeZhang2018_sup}
N.~Nolde and J.~Zhang.
\newblock Conditional extremes in asymmetric financial markets.
\newblock \emph{Journal of Business \& Economic Statistics}, 38: 201--213, 2018.

\end{thebibliography}

\clearpage
\appendix
\def\thesection{S\arabic{section}}
\def\thesubsection{\arabic{section}\arabic{subsection}}
\setcounter{equation}{0}
\renewcommand{\theequation}{S\arabic{equation}}
\setcounter{page}{1}

\begin{NoHyper}

\begin{center}
    {\LARGE An Extreme Value Approach to CoVaR Estimation\\
  \bf Supplementary Appendix\\
  }
\end{center}

\section{Proof of Theorem \ref{main theorem}}
\label{sup:proof}
\label{appen_proof}
Recall the definition of $\eta_p$:
$$\h_p = \dfrac{\pbb\bigl(Y\ge \CoVaR_{Y|X}(p)\bigr)}{\pbb\bigl(Y\ge \CoVaR_{Y|X}(p)\mid X\ge \VaR_X(p)\bigr)}\in(0,1],\qquad p\in(0,1).$$
We can then rewrite the ratio $\frac{\widehat{\CoVaR}_{Y|X}(p)}{\CoVaR_{Y|X}(p)}$ as 
\begin{align*}
    \frac{\widehat{\CoVaR}_{Y|X}(p)}{\CoVaR_{Y|X}(p)} &=\frac{\left(\hat\eta_p^*\right)^{-\hat\gamma}\widehat{\VaR}_Y(p)}{\VaR_Y(p\eta_p)}\\
    &=\left(\frac{\hat\eta_p^*}{\eta_p}\right)^{-\hat\gamma}\times \eta_p^{\gamma-\hat\gamma
    } \times \frac{\widehat{\VaR}_Y(p)}{\VaR_Y(p)} \times\frac{\left(\eta_p\right)^{-\gamma}\VaR_Y(p)}{\VaR_Y(p\eta_p)} \\
    &=:I_1\times I_2\times I_3\times I_4.
\end{align*}
The theorem is proved by showing that, as $n\to\infty$, $I_j\top1$ for $j=1,2,3,4$. 

Before handling these four terms, the following two lemmas provide some preliminary results regarding the quantities $\eta_p$ and $\eta^*_p$ as well as the estimator $\hat\eta^*_p$.

\begin{lemma} \label{lem:eta* and hat eta*}
    Under the same conditions as in Theorem \ref{main theorem}, we have that, as $n\to\infty$,
$$\frac{p}{\eta^*_p}\to R_2(1,0;\thb_0) \text{\ and \ }\frac{p}{\hat\eta^*_p}\top R_2(1,0;\thb_0).$$
\end{lemma}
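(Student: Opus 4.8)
The plan is to exploit the identity $R(1,0;\thb)=0$, valid for every $\thb$ by property~3) of Proposition~\ref{pTDF} (indeed $0\le R(1,0;\thb)\le 1\wedge 0=0$), so that the defining equation $R(1,\eta_p^*;\thb_0)=p$ can be read as a difference quotient of $R$ in its second argument at the corner point $(1,0)$. Concretely, I would write $p/\eta_p^*=\bigl(R(1,\eta_p^*;\thb_0)-R(1,0;\thb_0)\bigr)/\eta_p^*$ and recognise the right-hand side as an increment quotient that must converge to the one-sided partial derivative $R_2(1,0;\thb_0)$ as $\eta_p^*\to0$.

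To make this rigorous I first need $\eta_p^*\to0$. Note that $p=p(n)\to0$, since \ref{con:p} forces $np/k_2\to0$ while $k_2/n\to0$ by \ref{con:k}. Because $R(1,\cdot;\thb_0)$ is continuous and monotonically non-decreasing with $R(1,0;\thb_0)=0$, and is strictly increasing near $0$ by $R_2(1,0;\thb_0)>0$ in \ref{con:R2}, the unique solution of $R(1,\eta;\thb_0)=p$ exists for small $p$ and tends to $0$ as $p\downarrow0$. Combining this with the continuity of $R_2(1,\cdot;\thb_0)$ near $0$ supplied by \ref{con:R2} yields $p/\eta_p^*=R(1,\eta_p^*;\thb_0)/\eta_p^*\to R_2(1,0;\thb_0)$, which is the first, deterministic, claim.

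For the second claim I would import the consistency $\hat\thb\top\thb_0$ delivered by \ref{con:varphi} through Theorem~4.1 of \cite{Einmahl_etal2012}. Since $R(1,0;\hat\thb)=0$ as well, the same difference-quotient identity combined with the mean value theorem applied to $y\mapsto R(1,y;\hat\thb)$ gives $p/\hat\eta_p^*=R(1,\hat\eta_p^*;\hat\thb)/\hat\eta_p^*=R_2(1,\xi_n;\hat\thb)$ for some intermediate $\xi_n\in(0,\hat\eta_p^*)$. I would then establish $\hat\eta_p^*\top0$: on the event $\{\|\hat\thb-\thb_0\|\le\epsilon_0\}$, whose probability tends to $1$, a uniform lower bound $R(1,y;\thb)\ge c\,y$ for small $y$ and $\thb$ near $\thb_0$ (coming from $R_2(1,0;\thb_0)>0$ and continuity, via $R(1,y;\thb)=\int_0^y R_2(1,s;\thb)\,ds$) forces $\hat\eta_p^*\le p/c\to0$. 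Given $\xi_n\top0$ and $\hat\thb\top\thb_0$, continuity of $R_2$ at $(1,0;\thb_0)$ then yields $R_2(1,\xi_n;\hat\thb)\top R_2(1,0;\thb_0)$, completing the proof.

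The main obstacle is precisely this final convergence: \ref{con:R2} only asserts continuity of $R_2(\cdot,\cdot;\thb)$ in its $y$-argument for each fixed $\thb$, whereas I need continuity of $(y,\thb)\mapsto R_2(1,y;\thb)$ jointly at $(0,\thb_0)$ — or equivalently an expansion $R(1,y;\thb)=R_2(1,0;\thb)\,y\,(1+o(1))$ holding uniformly for $\thb$ in a neighbourhood of $\thb_0$, together with continuity of $\thb\mapsto R_2(1,0;\thb)$. I would secure this either by strengthening \ref{con:R2} to joint continuity of $R_2$ near $(1,0;\thb_0)$ — which holds for the smooth parametric families used in Section~\ref{sim} — or by a uniform-equicontinuity argument over the compact neighbourhood $\{\thb:\|\thb-\thb_0\|\le\epsilon_0\}$ guaranteed by \ref{con:varphi}. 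Managing the three simultaneous limits $p\to0$, $\hat\eta_p^*\to0$ and $\hat\thb\to\thb_0$ without losing this uniformity is where the real care is required.
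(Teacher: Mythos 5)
Your argument is correct and follows essentially the same route as the paper's proof: show $\eta_p^*\to0$ (the paper does this by contradiction using monotonicity of $R(1,\cdot\,;\thb_0)$ and \ref{con:R2}), apply the mean value theorem together with $R(1,0;\thb_0)=0$ to write $p/\eta_p^*=R_2(1,\xi_n;\thb_0)$, invoke continuity of $R_2$ at $(1,0;\thb_0)$, and then repeat the argument with $\hat\thb$ in place of $\thb_0$ using consistency of the M-estimator. The joint-continuity obstacle you flag at the end is genuine, but it is not a defect specific to your write-up: the paper disposes of the stochastic claim with the single remark that it ``follows similarly by replacing $\thb_0$ with $\hat\thb$'' and omits the details, thereby using implicitly exactly the continuity of $(y,\thb)\mapsto R_2(1,y;\thb)$ at $(0,\thb_0)$ (and an accompanying uniform lower bound needed to conclude $\hat\eta_p^*\top 0$) that you make explicit and propose to secure by strengthening \ref{con:R2}.
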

\begin{lemma} \label{lem:eta and eta*}
    Under the same conditions as in Theorem \ref{main theorem}, we have, as $n\to\infty$,
    $$\frac{\eta_p}{\eta^*_p}\to1.$$
\end{lemma}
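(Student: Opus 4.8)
The plan is to turn the defining relation for $\eta_p$ into an approximate version of the defining equation for $\eta_p^*$, and then transfer closeness of the two defining relations into closeness of the arguments $\eta_p$ and $\eta_p^*$ by exploiting the local behaviour of $R$ near the point $(1,0)$.

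First I would unfold the definition of $\eta_p$. Combining the CoVaR definition~\eqref{q1} with the identity $\CoVaR_{Y|X}(p)=\VaR_Y(p\eta_p)$ from~\eqref{qeta2}, and using $\pbb(X\ge\VaR_X(p))=p$ for continuous margins, I obtain
$$\pbb\bigl(F_X(X)\ge 1-p,\ F_Y(Y)\ge 1-p\eta_p\bigr)=p^2.$$
Dividing by $p$ and applying the uniform second-order bound of \ref{con:soc for R} at $u=p$, $x=1$ and $y=\eta_p\in(0,1]$ gives
$$R(1,\eta_p)=p+O(p^{\tilde\rho}),$$
whereas $\eta_p^*$ satisfies $R(1,\eta_p^*)=p$ exactly by~\eqref{qeta*}. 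Subtracting yields $R(1,\eta_p)-R(1,\eta_p^*)=O(p^{\tilde\rho})$. The subtle point in this step is that the second argument $\eta_p$ is not fixed but varies with $p$; this is precisely why the pointwise limit in~\eqref{qTDF} is not enough and the uniform rate in \ref{con:soc for R} is genuinely needed.

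Next I would convert this difference in $R$-values into a difference in arguments. Since $R(1,0)=0$ by Proposition~\ref{pTDF}(3), and $R_2(1,0;\thb_0)>0$ with $R_2$ continuous near $(1,0)$ by \ref{con:R2}, the map $\eta\mapsto R(1,\eta)$ is strictly increasing near $0$; combined with Lemma~\ref{lem:eta* and hat eta*}, which gives $\eta_p^*\sim p/R_2(1,0;\thb_0)\to0$, this forces $\eta_p\to0$ as well. Applying the mean value theorem on the interval between $\eta_p$ and $\eta_p^*$,
$$R(1,\eta_p)-R(1,\eta_p^*)=R_2(1,\xi_p;\thb_0)\,(\eta_p-\eta_p^*),\qquad \xi_p\to0,$$
and by continuity $R_2(1,\xi_p;\thb_0)\to R_2(1,0;\thb_0)>0$, so this factor is bounded away from zero. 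Hence $\eta_p-\eta_p^*=O(p^{\tilde\rho})$.

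Finally I would divide through by $\eta_p^*$, obtaining
$$\frac{\eta_p}{\eta_p^*}-1=\frac{\eta_p-\eta_p^*}{\eta_p^*}=\frac{O(p^{\tilde\rho})}{\eta_p^*}.$$
Because $\eta_p^*\asymp p$ by Lemma~\ref{lem:eta* and hat eta*}, the right-hand side is $O(p^{\tilde\rho-1})$, which tends to $0$ as $p=p(n)\to0$ precisely because $\tilde\rho>1$, as assumed in Theorem~\ref{main theorem}. I expect the main obstacle to be exactly this interplay of rates: the approximation error in the defining equation is only $O(p^{\tilde\rho})$, yet it must survive division by $\eta_p^*\asymp p$, so the hypothesis $\tilde\rho>1$ is not a technical convenience but is what actually makes the ratio converge.
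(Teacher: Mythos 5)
Your proof is correct and rests on the same three ingredients as the paper's own argument: the uniform second-order bound of \ref{con:soc for R} with $\tilde\rho>1$ applied at $(x,y)=(1,\eta_p)$, the mean value theorem near $(1,0)$ justified by \ref{con:R2}, and Lemma~\ref{lem:eta* and hat eta*} to get $\eta_p^*\asymp p$. The only difference is organizational: the paper proves $\limsup_{n\to\infty}\eta_p/\eta_p^*\le 1$ and the matching lower bound by contradiction along subsequences, whereas you run the same estimates forward and obtain the explicit rate $|\eta_p/\eta_p^*-1|=O(p^{\tilde\rho-1})$, which is in fact slightly more informative than the bare convergence stated in the lemma.
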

\begin{proof}[Proof of Lemma \ref{lem:eta* and hat eta*}]
In order to prove the limit relation regarding $\eta^*_p$, we first show that as $n\to\infty$, $\eta^*_p\to 0$. If otherwise, then there exists a subsequence of integers, $\{n_l\}$ such that $\eta^*_{p(n_l)}\to c>0$ as $l\to\infty$. W.l.o.g., we still use the notation $n$ instead of $n_l$. Then, as $n\to\infty$, $R(1,\eta^*_p;\thb_0)\to R(1,c;\thb_0)>0$ which follows from \ref{con:R2} and the fact that $R(1,y;\thb_0)$ is a non-decreasing function in $y$. However, this contradicts with $R(1,\eta^*_p;\thb_0)=p\to 0$ as $n\to\infty$. Hence, we conclude that $\eta^*_p\to 0$ as $n\to\infty$.

Using the mean value theorem, we have that there exists a series of constants $\xi_n\in[0, \eta^*_p]$ such that
$$
p=R(1,\eta^*_p;\thb_0)=R(1,0;\thb_0)+\eta^*_p R_2(1,\xi_n;\thb_0)=\eta^*_p R_2(1,\xi_n;\thb_0).
$$
Hence we get that, as $n\to\infty$,
$$\frac{p}{\eta^*_p}=R_2(1,\xi_n;\thb_0)\to R_2(1,0;\thb_0).$$
Here in the last step, we use the fact that $\xi_n\to 0$ as $n\to\infty$ and $R_2(x,y;\thb_0)$ is a continuous function at $(1,0;\thb_0)$.

The proof for the limit relation regarding $\hat\eta^*_p$ follows similarly by replacing $\thb_0$ with $\hat\thb$ and using the fact that $\hat\thb\top\thb_0$ as $n\to\infty$. We therefore omit the details.
\end{proof}

\begin{proof}[Proof of Lemma \ref{lem:eta and eta*}]
    We first show that, as $n\to\infty$, $\eta_p\to 0$. If assuming otherwise, there exists a subsequence of integers, $\{n_l\}$ such that $\eta_{p(n_l)}\to c>0$ as $l\to\infty$. W.l.o.g., we still use the notation $n$ instead of $n_l$. Recall the definition of $\eta_p$:
    $$\frac{\pbb(X>\VaR_X(p),Y>\VaR_Y(p\eta_p))}{p}=p.$$
    By taking $n\to\infty$ on both sides of this equation, and using the assumption that $\eta_p\to c>0$ as $n\to\infty$, we get that
    $R(1,c;\thb_0)=0$, which contradicts \ref{con:R2} and the fact that $R(1,y;\thb_0)$ is a non-decreasing function in $y$. Hence, we conclude that, as $n\to\infty$, $\eta_p\to 0$.

Next we show, by contradiction, that 
$$\limsup_{n\to\infty}\frac{\eta_p}{\eta^*_p}\leq 1.$$
If assuming otherwise, there exists a subsequence of $n$, $\{n_l\}_{l=1}^\infty$ such that as $l\to\infty$, $n_l\to\infty$ and
$$\frac{\eta_{p(n_l)}}{\eta^*_{p(n_l)}}\to c>1.$$
W.l.o.g., we still use the notation $n$ for the subsequence, and omit it by writing $p=p(n)$. Therefore, for any $1<\tilde c<c$, there exists $n_0=n_0(\tilde c)$ such that for $n>n_0$, $\frac{\eta_p}{\eta^*_p}>\tilde c.$

Note that $\eta_p>\tilde c \eta^*_p>\eta^*_p$. By the mean value theorem, we get that for each $n$, there exists $\xi_n\in(\eta^*_p,\eta_p)$ such that 
$$R(1,\eta_p;\thb_0) -R(1,\eta^*_p;\thb_0) =R_2(1,\xi_n;\thb_0)(\eta_p-\eta^*_p).$$
As $n\to\infty$, since both $\eta^*_p\to 0$ and $\eta_p\to 0$ hold, we get $\xi_n\to 0$. Further note that $\eta_p-\eta^*_p>(\tilde c-1)\eta^*_p$. By applying Lemma \ref{lem:eta* and hat eta*} and the continuity of $R_2(x,y;\thb)$ at $(1,0;\thb_0)$, we get that 
\begin{align*}
\liminf_{n\to\infty}\frac{R(1,\eta_p;\thb_0) -p}{p}&=\liminf_{n\to\infty}\frac{R(1,\eta_p;\thb_0) -R(1,\eta^*_p;\thb_0)}{p}\\
&=\liminf_{n\to\infty}\frac{R(1,\eta_p;\thb_0) -R(1,\eta^*_p;\thb_0)}{\eta^*_p}\times\frac{\eta^*_p}{p}\\
&\geq R_2(1,0;\thb_0)(\tilde c-1)\times \frac{1}{R_2(1,0;\thb_0)}=\tilde c-1>0.
\end{align*}
Since \ref{con:soc for R} holds with $\tilde\rho>1$, we get that
$$\lim_{n\to\infty}\frac{R(1,\eta_p;\thb_0) -p}{p}=\lim_{n\to\infty}\frac{1}{p}\left(R(1,\eta_p;\thb_0)-\frac{1}{p}\pbb(X>\VaR_X(p),Y>\VaR_Y(p\eta_p))\right)=0.$$
The two limit relations contradict each other. Therefore, we conclude that $$\limsup_{n\to\infty}\frac{\eta_p}{\eta^*_p}\leq 1.$$
Similarly, one can show a lower bound for $\frac{\eta_p}{\eta^*_p}$, which completes the proof of the lemma.
\end{proof}

Now we turn to prove the main theorem by handling the four terms $I_j$, $j=1,2,3,4$.

Firstly, we handle $I_1$. Following the asymptotic property of the Hill estimator (e.g., Theorem 3.2.5 in \cite{dHF2006_sup}), \ref{con:soc} and \ref{con:k} for $k_1$ imply that as $n\to\infty$, 
\begin{equation}\label{eq:asymptotic Hill}
    \sqrt{k_1}(\hat\gamma-\gamma)\stackrel{d}{\to} N\left(\frac{\lambda_1}{1-\rho},\gamma^2\right),
\end{equation}
which implies that
$\hat\gamma\top \gamma$. Together with Lemma \ref{lem:eta and eta*} and Lemma \ref{lem:eta* and hat eta*}, we conclude that $I_1\top 1$ as $n\to\infty$.

Secondly, we handle $I_2$. Given the limit relation in \eqref{eq:asymptotic Hill}, we only need to show that $\log (\eta_p)/\sqrt{k_1}\to 0$ as $n\to\infty$. From Lemma \ref{lem:eta* and hat eta*} and Lemma \ref{lem:eta and eta*}, we get that $\eta_p/p\to 1/R_2(1,0,\thb_0)$ as $n\to\infty$. Together with the limit relation regarding $k_1$ in \ref{con:k}, we get that $I_2\top 1$ as $n\to\infty$.

The term $I_3$ is handled by the asymptotic property of the VaR estimator; see, e.g. Theorem 4.3.8 in \cite{dHF2006_sup}. More specifically, under \ref{con:soc}, \ref{con:k} and \ref{con:p}, the VaR estimator in Section~\ref{method:est} has the following asymptotic property: as $n\to\infty$,
$$ \min\left(\sqrt{k_2},\frac{\sqrt{k_1}}{\log (k_2/np)}\right)\left(\frac{\widehat{\VaR}_Y(p)}{\VaR_Y(p)}-1\right)=O_P(1). $$
The result follows from the proof of Theorem 4.3.8 in \cite{dHF2006_sup} with some proper adaptations. A direct consequence is that $I_3\top1$ as $n\to\infty$.

Finally, we handle the deterministic term $I_4$. Notice that $\VaR_Y(p)=U_Y(1/p)$ and $\VaR_Y(p\eta_p)=U_Y(1/(p\eta_p))$. By applying \ref{con:soc} with $t=1/p$ and $x=1/\eta_p$, we get that
$$\lim_{n\to\infty}\frac{\frac{\VaR_Y(p\eta_p)}{\VaR_Y(p)}\eta_p^\gamma-1}{A(1/p)}=-\frac{1}{\rho}.$$
As $n\to\infty$, since $A(1/p)\to0$ we get that $I_4\to 1$.
\qed

\section{Parametric models for the tail dependence function}\label{sS3}
In this section, we provide tail dependence functions for five parametric models considered in simulation studies and the application. 
\begin{enumerate}[label=(\arabic*)]
    \item The bivariate logistic distribution function with standard Fr\'echet margins is given by
    $$G(x,y;\theta) = \exp\left\{-(x^{-1/\theta}+y^{-1/\theta})^\theta\right\},$$
    where $x, y>0$ and $\theta \in (0,1]$. The upper tail dependence function in this case has the form
\begin{equation}\label{tail_log}
R(x,y;\theta) = x + y - (x^{1/\theta}+y^{1/\theta})^{\theta}.
\end{equation}
    
    \item The bivariate H\"{u}sler-Reiss distribution function with standard Fr\'echet margins is
    $$G(x,y;\theta) = \exp\left\{-x^{-1}\Phi\Bigl(\theta^{-1}+\frac{\theta}{2}\log(y/x)\Bigr)-y^{-1}\Phi\Bigl(\theta^{-1}+\frac{\theta}{2}\log(x/y)\Bigr)\right\},$$
    where $x, y>0$, $\theta >0$ and $\Phi(\cdot)$ is the standard normal distribution function. Its tail dependence function is given by
\begin{equation}\label{tail_hr}
R(x,y;\theta) = x + y - x\Phi\Bigl(\theta^{-1}+\frac{\theta}{2}\log(x/y)\Bigr)-y\Phi\Bigl(\theta^{-1}+\frac{\theta}{2}\log(y/x)\Bigr).
\end{equation}

    \item The bilogistic distribution function with standard Fr\'echet margins is given by
$$
 G(x,y;\alpha,\beta) = \exp\left\{-x^{-1}q^{1-\alpha}-y^{-1}(1-q)^{1-\beta}\right\},\quad x,y>0,
$$
where $q$ is the root of the equation $(1-\alpha)x^{-1}(1-q)^{\beta}-(1-\beta)y^{-1}q^{\alpha} = 0$, and $0<\alpha,\beta<1$. The tail dependence function of this distribution can be written as
\begin{equation}\label{tail_bilog}
R(x,y;\alpha,\beta) = x + y - \int_0^1 \max\left\{(1-\alpha)t^{-\alpha}x,(1-\beta)(1-t)^{-\beta}y \right\}dt.
\end{equation}

    \item The bivariate asymmetric logistic distribution with standard Fr\'echet margins has distribution function of the form
$$
 G(x,y;\psi_1,\psi_2,\theta) = \exp\Bigl\{-(1-\psi_1)/x-(1-\psi_2)/y- \bigl((\psi_1/x)^{1/\theta}+(\psi_2/y)^{1/\theta}\bigr)^\theta\Bigr\},
$$
where $x,y>0$, $\theta\in(0,1]$ and $\psi_1,\psi_2\in [0,1]$. Its tail dependence function is given by
\begin{equation}\label{tail_alog}
R(x,y;\psi_1,\psi_2,\theta)=\psi_1x+\psi_2y-\bigl((x\psi_1)^{1/\theta}+(y\psi_2)^{1/\theta}\bigr)^{\theta}.
\end{equation}
    
    \item The joint density function of a standard bivariate $t$ distribution with $\n>0$ degrees of freedom and correlation parameter $\r\in(-1,1)$ is written as
$$
 f_{T}(\wb;\rho,\nu) = \dfrac{\Gamma\bigl((\nu+2)/2\bigr)}{\sqrt{1-\rho^2}\nu\pi\Gamma(\nu/2)}\Bigl(1+\dfrac{1}{\nu}\wb^T\Omega^{-1}\wb\Bigr)^{-(\nu+2)/2},\quad \Omega=\bma 1 & \r\\ \r & 1 \ema,\quad \wb\in\rbb^2.
 $$
For $\r\in(0,1)$, its upper tail dependence function is given by
\begin{equation}\label{tail_t}R(x,y;\rho,\nu)=xF_{T}\Bigl(\sqrt{\frac{\nu+1}{1-\rho^2}}\bigl(\rho-(y/x)^{-1/\nu}\bigr);\nu+1 \Bigr)+yF_{T}\Bigl(\sqrt{\frac{\nu+1}{1-\rho^2}}\bigl(\rho-(x/y)^{-1/\nu}\bigr);\nu+1 \Bigr), 
\end{equation}
where $F_T(\cdot;\n)$ is the distribution function of the standard Student $t$ distribution with $\n$ degrees of freedom. Expression~\eqref{tail_t} is founded following \cite{DemartaMcNeil2005}, where the lower tail dependence function of the bivariate $t$ distribution is given.

\end{enumerate}

\section{Plots of \texorpdfstring{$R(1,\h)$}{R1eta} as a function of \texorpdfstring{$\h$}{eta}}
\label{sup:tail_fun}

Figure~\ref{tail_fun} illustrates function  $R(1,\h)$ under the five tail dependence models listed in Section~\ref{sS3} and used in simulation studies and the application. The upper bound $R(1,\h)\le \h$ for $\h\in(0,1)$ corresponds to the case of complete positive dependence. This implies that when tail dependence is fairly strong, $R(1,\h)$ is close to the linear function $R(1,\h)=\h$. For a given $p$, $\h^*$ in $R(1,\eta^*) = p$ decreases as tail dependence gets stronger. 

\begin{figure}[ht]
  \centering 
  \subfigure[Logistic]{
    \includegraphics[width=0.3\linewidth]{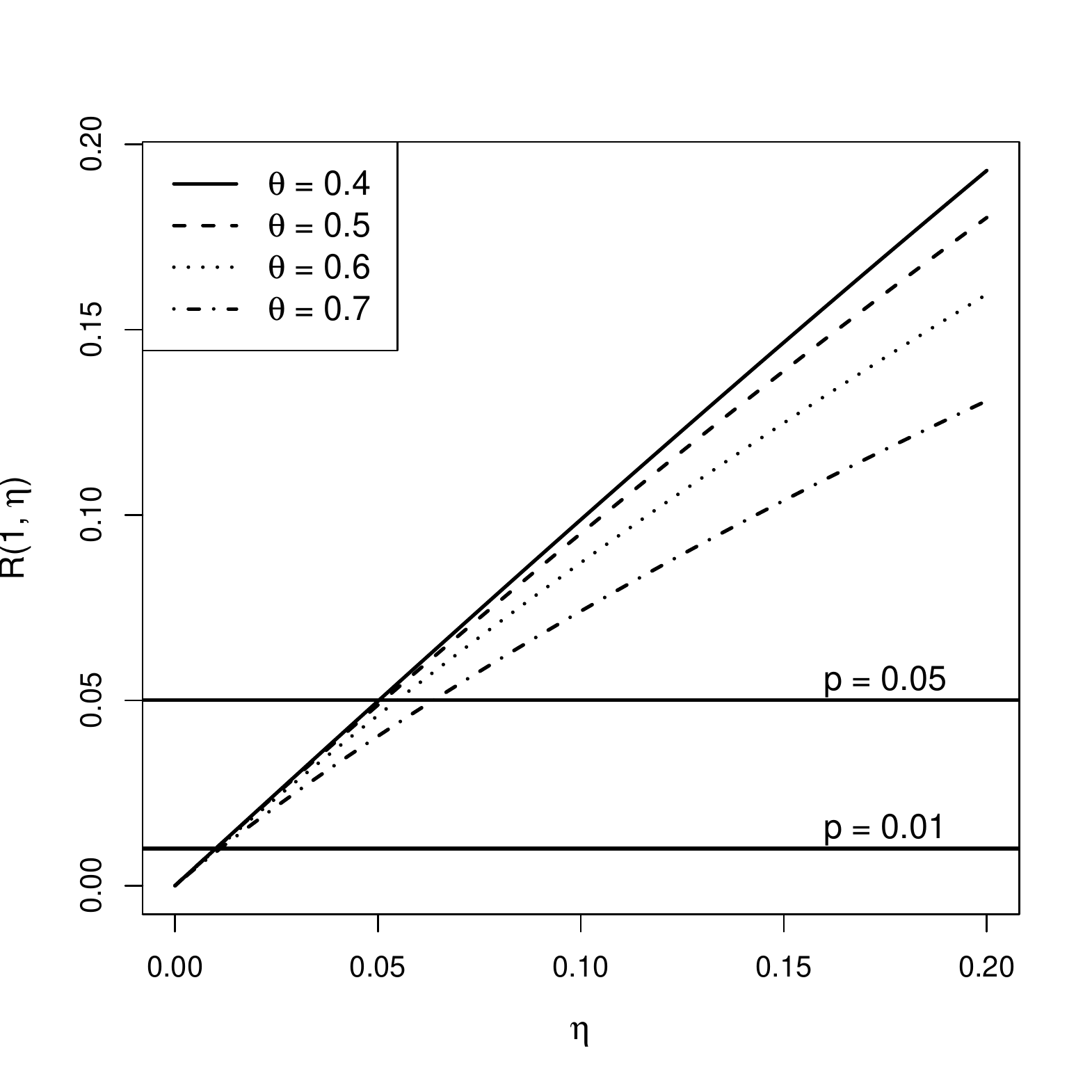}}
  \subfigure[H\"{u}sler-Reiss]{
    \includegraphics[width=0.3\linewidth]{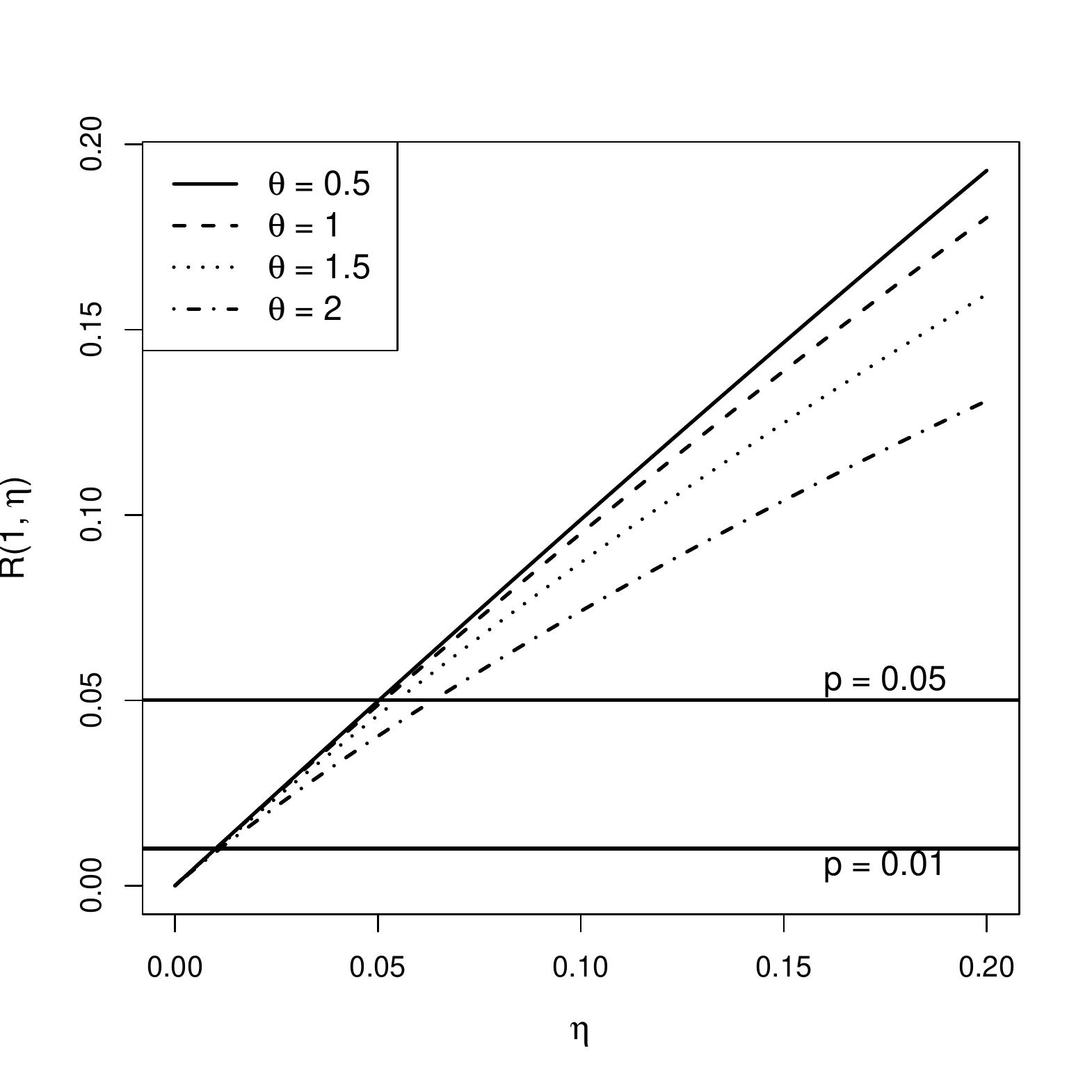}}
    \subfigure[Bilogistic]{
    \includegraphics[width=0.3\linewidth]{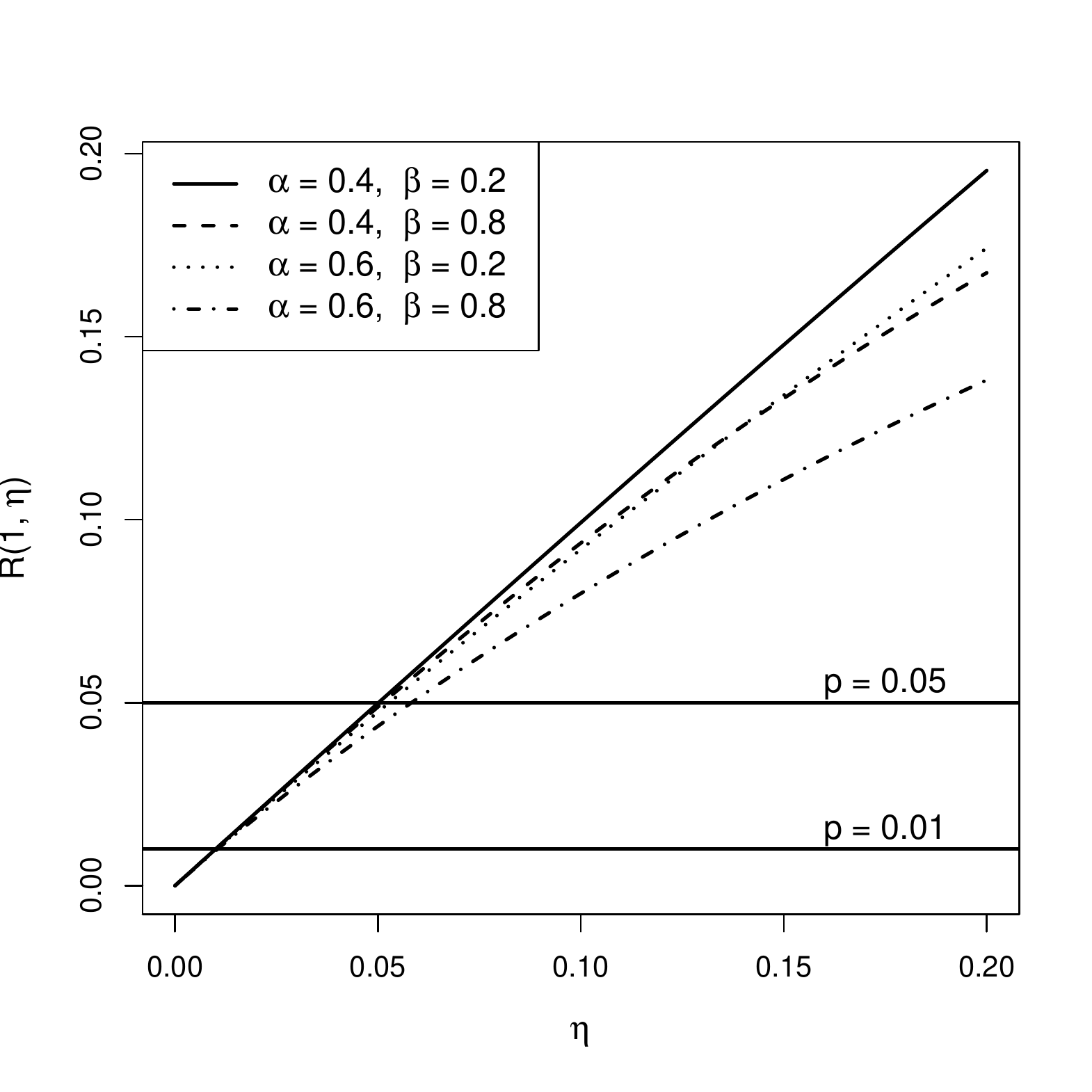}}
    \subfigure[Asymmetric logistic]{
    \includegraphics[width=0.3\linewidth]{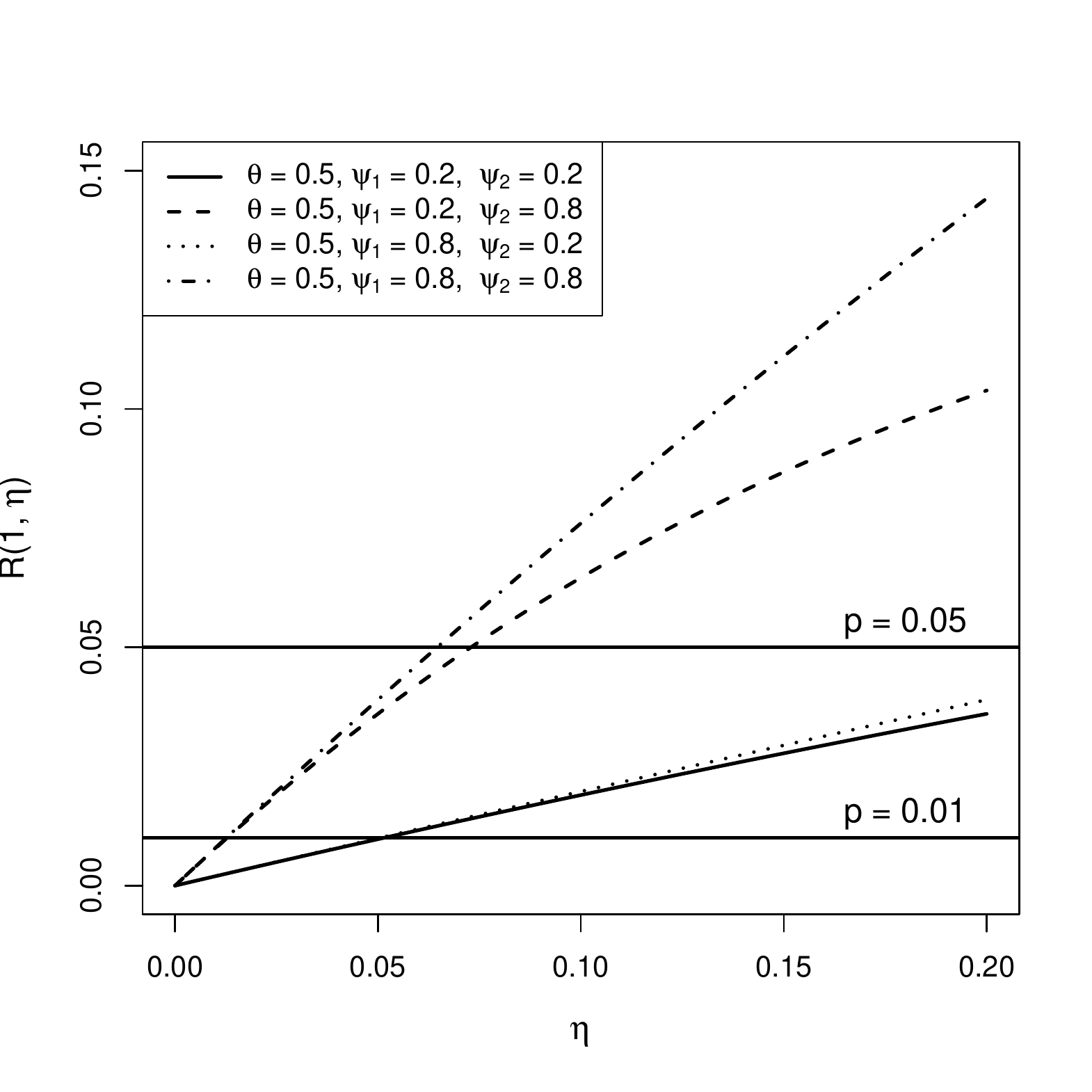}}
    \subfigure[t]{
    \includegraphics[width=0.3\linewidth]{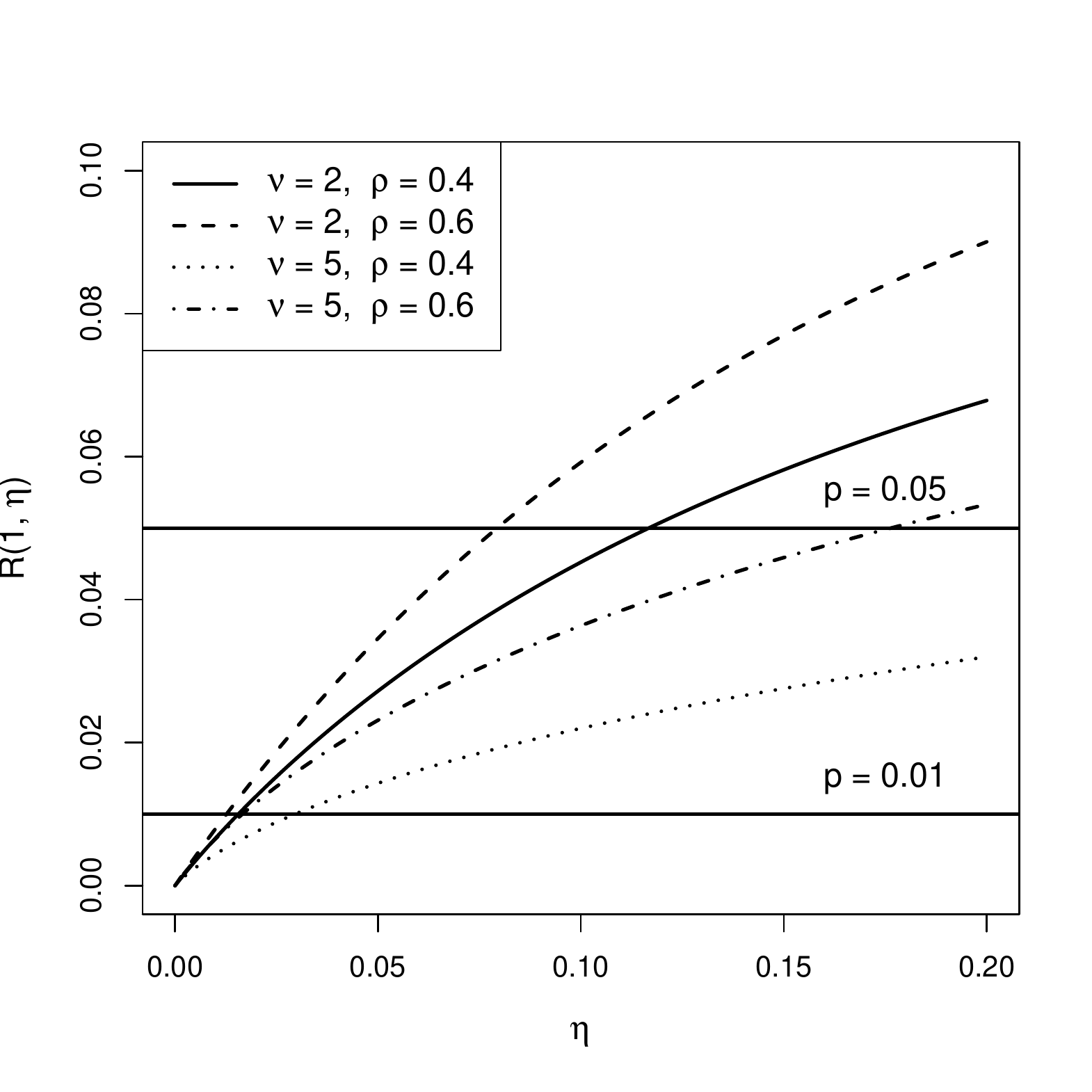}}
    \caption{Plots of $R(1,\eta)$ as a function of $\eta$ under five tail dependence models.}
  \label{tail_fun}
\end{figure}

\section{Two-stage procedure for estimating \texorpdfstring{$\CoVaR_{t}^{s|i}(p_1,p_2)$}{CoVaRt}}\label{sS4}

To estimate CoVaR dynamically, we need to apply a two-stage procedure, which can be summarized in the steps below.

\noindent{\bf Step 1 }(Univariate GARCH model estimation): Assume that $\{X_t^i\}_{t\in \nbb}$ and $\{X_t^s\}_{t\in \nbb}$ each follows an AR(1)-GARCH(1,1) process \citep{Bollerslev1986} satisfying the following model equations:
\begin{align*}
&X_t^{i} = \mu_t^{i} + \sigma_t^{i}Z_t^{i}, \quad \mu_t^{i} = \alpha_0^{i} + \alpha_1^{i}X_{t-1}^{i}, \quad (\sigma_t^{i})^2 = \beta_0^{i} + \beta_1^{i}(\sigma_{t-1}^{i}Z_{t-1}^{i})^2 + \beta_2^{i}(\sigma_{t-1}^{i})^2,\\
&X_t^{s} = \mu_t^{s} + \sigma_t^{s}Z_t^{s}, \quad \mu_t^{s} = \alpha_0^{s} + \alpha_1^{s}X_{t-1}^{s}, \quad (\sigma_t^{s})^2 = \beta_0^{s} + \beta_1^{s}(\sigma_{t-1}^{s}Z_{t-1}^{s})^2 + \beta_2^{s}(\sigma_{t-1}^{s})^2,
\end{align*}
where sequences of innovations $\{Z_t^{i}\}_{t\in \nbb}$ and $\{Z_t^{s}\}_{t\in \nbb}$ are  i.i.d. with zero mean and unit variance. Parameters of the AR(1)-GARCH(1,1) filters are estimated using maximum likelihood assuming a standardized skew-$t$ distribution (\cite{FernandezSteel1998}) for the innovations. With the estimates of conditional means and volatilities, we can obtain two sequences that could be used as proxies for realized innovations: 
\begin{equation}
\label{inno}
    \bigl\{\hat{Z}_t^{i} = (X_t^{i} - \hat{\mu}_t^{i})/{\hat{\sigma}_t^{i}}\bigr\},\qquad \bigl\{\hat{Z}_t^{s} = (X_t^{s} - \hat{\mu}_t^{s})/{\hat{\sigma}_t^{s}}\bigr\}.
\end{equation}

\noindent{\bf Step 2 }(Dynamic CoVaR estimation): Based on the time series representation of losses, $\CoVaR_{t}^{s|i}(p_1,p_2)$ can be expressed as
\begin{align*}
1 - p_2  &= \pbb\bigl(X_t^s \geq \CoVaR_{t}^{s|i}(p_1,p_2) | X_t^i\geq \VaR^i_{t}(p_1);\ \FC_{t-1}^i,\   \FC_{t-1}^s\bigr)\\
&=\pbb\Bigg(Z_t^s \geq \dfrac{\CoVaR_{t}^{s|i}(p_1,p_2)-\m_t^s}{\s_t^s}\Big | Z_t^i\geq \dfrac{\VaR^i_{t}(p_1)-\m_t^i}{\s_t^i};\ \FC_{t-1}^i,\   \FC_{t-1}^s\Biggr).
\end{align*}
This suggests first estimating risk measures based on the samples of realized innovations in \eqref{inno}, treated as i.i.d., and then computing the dynamic forecasts for time $t$ via
\begin{equation}
\widehat{\CoVaR}_{t}^{s|i}(p_1,p_2) = \hat{\mu}_t^{s} + \hat{\s}_t^s \widehat{\CoVaR}_{Z^s|Z^i}(p_1,p_2),\qquad \widehat{\VaR}^i_{t}(p_1)=\hat{\mu}_t^i + \hat{\s}_t^i\widehat{\VaR}_{Z^i}(p_1).
\end{equation}

We note that in Step~1 above, if there is evidence of time changing correlation structure in the data, an alternative is to use a bivariate GARCH filter as was previously done in \cite{Girardi2013_sup} and \cite{NoldeZhang2018_sup}. For the data considered here, filtering out correlation led to weaker tail dependence potentially invalidating the assumption of tail dependence. We, therefore, chose to apply the GARCH filters only marginally.  

\end{NoHyper}

\bibliographystyle{plainnat}


\end{document}